\newtheorem{theorem}{Theorem}[section]
\newtheorem{lemma}[theorem]{Lemma}
\theoremstyle{definition}
\newtheorem{definition}[theorem]{Definition}
\newtheorem{example}[theorem]{Example}
\newtheorem{remark}[theorem]{Remark}
\numberwithin{equation}{section}
\begin{document}
\title[Characterizing Follower and Extender Set Sequences]{Characterizing Follower and Extender Set Sequences}

\begin{abstract}

Given a one-dimensional shift $X$, let $|F_X(\ell)|$ be the number of follower sets of words of length $\ell$ in $X$. We call the sequence $\{|F_X(\ell)|\}_{\ell \in \mathbb{N}}$ the follower set sequence of the shift $X$. Extender sets are a generalization of follower sets (see ~\cite{KassMadden}), and we define the extender set sequence similarly. In this paper, we explore which sequences may be realized as follower set sequences and extender set sequences of one-dimensional sofic shifts. We show that any follower set sequence or extender set sequence of a sofic shift must be eventually periodic. We also show that, subject to a few constraints, a wide class of eventually periodic sequences are possible. In fact, any natural number difference in the $\limsup$ and $\liminf$ of these sequences may be achieved, so long as the $\liminf$ of the sequence is sufficiently large.
\end{abstract}

\date{}
\author{Thomas French}
\address{Thomas French\\
Department of Mathematics\\
University of Denver\\
2280 S. Vine St.\\
Denver, CO 80208}
\email{Thomas.French@du.edu}

\subjclass[2010]{Primary: 37B10}
\maketitle

\section{Introduction}
\label{intro}

The complexity function of a shift $X$, $\Phi_X(\ell)$, counts the number of words of a given length $\ell$ in the language of the shift $X$. This function is natural to study; in particular, it may be used to calculate topological entropy of symbolic shifts. The Morse-Hedlund Theorem implies that if there exists $\ell \in \mathbb{N}$ with $\Phi_X(\ell) \leq \ell$, then every sequence in $X$ must be periodic. (See ~\cite{MorseHedlund}) \newline 
\indent For any $\mathbb{Z}$ shift $X$ and finite word $w$ appearing in some point of $X$, the \textbf{follower set} of $w$, written $F_X(w)$, is defined as the set of all one-sided infinite sequences $s$ such that the infinite word $ws$ occurs in some point of $X$. The \textbf{extender set} of $w$, written $E_X(w)$, is the set of all pairs of infinite sequences $(s,u)$, $s$ left-infinite and $u$ right-infinite, such that $swu$ is a point of $X$. It is well-known that for a $\mathbb{Z}$ shift $X$, finiteness of $\{F_X(w) \ | \ w \text{ in the language of $X$} \}$ is equivalent to $X$ being sofic, that is, the image of a shift of finite type under a continuous shift-commuting map. (see ~\cite{LindMarcus}) (In fact it is true that finiteness of $\{E_X(w) \ | \ w \text{ in the language of $X$}\}$ is equivalent to $X$ being sofic as well, see ~\cite{OrmesPavlov}).

We define the set $F_X(\ell)$ to be $\{F_X(w) \ | \ w \text{ has length }\ell\}$ for any positive integer $\ell$. Thus $|F_X(\ell)|$ is the total number of follower sets which correspond to some word $w$ of length $\ell$ in $X$. $|E_X(\ell)|$ is defined similarly for extender sets. Since the alphabet is finite, there are only finitely many words of a given length $\ell$, and so for any shift (sofic or not), $|F_X(\ell)|$ and $|E_X(\ell)|$ are finite for every $\ell$. If $X$ is sofic, $\{F_X(w) \ | \ w \text{ in the language of $X$} \}$ is finite, and thus the follower set sequence $\{|F_X(\ell)|\}_{\ell \in \mathbb{N}}$ must be bounded, and similarly for the extender set sequence $\{|E_X(\ell)|\}_{\ell \in \mathbb{N}}$. Ormes and Pavlov have proved a result in the style of Morse-Hedlund, that if $|E_X(\ell)| \leq \ell$ for any $\ell \in \mathbb{N}$, then $X$ is necessarily sofic. (See ~\cite{OrmesPavlov}) This may lead one to believe that there is a connection between those sequences which may appear as complexity sequences of shifts and those which may appear as extender set sequences of shifts. The results in this paper suggest otherwise; in particular, extender set sequences need not be monotonic!

First, any follower set sequence or extender set sequence of a one-dimensional sofic shift must be eventually periodic:

\begin{theorem}\label{eventuallyperiodic}
Let $X$ be a one-dimensional sofic shift, $p$ be one greater than the total number of extender sets in $X$, and $p_0$ be one greater than the total number of follower sets in $X$. Then the extender set sequence $\{|E_X(\ell)|\}_{\ell \in \mathbb{N}}$ is eventually periodic, where the periodicity must begin before the $p(1+p!)^{th}$ term, and the least eventual period is at most $p!$. The follower set sequence $\{|F_X(\ell)|\}_{\ell \in \mathbb{N}}$ is eventually periodic, where the periodicity must begin before the $p_0(1+p_0!)^{th}$ term, and the least eventual period is at most $p_0!$.
\end{theorem}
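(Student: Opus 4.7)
The plan is to exploit the fact that, for a sofic shift, both follower sets and extender sets evolve deterministically under right concatenation. I will carry out the follower-set case; the extender-set case is parallel.

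Write $\mathcal{F}$ for the (finite, by soficity) collection of all follower sets of legal words of $X$, so $|\mathcal{F}| = p_0 - 1$. The key identity $F_X(wa) = \{u : au \in F_X(w)\}$, valid whenever $wa$ is legal, shows that $F_X(wa)$ depends only on $F_X(w)$ and $a$. This gives a partial transition $\tau : \mathcal{F} \times \mathcal{A} \to \mathcal{F}$, and lifting to subsets we obtain $T : 2^{\mathcal{F}} \to 2^{\mathcal{F}}$ defined by $T(S) := \{\tau(V,a) : V \in S,\ a \in \mathcal{A},\ \tau(V,a)\text{ defined}\}$. One then checks that $F_X(\ell+1) = T(F_X(\ell))$, so the subset-valued sequence $(F_X(\ell))_{\ell \geq 1}$ is the orbit of $F_X(1)$ under a deterministic self-map of the finite set $2^{\mathcal{F}}$. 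Any such orbit is eventually periodic, and hence so is the size sequence $|F_X(\ell)|$. The same argument, using the analogous identity $E_X(wa) = \{(s,u) : (s, au) \in E_X(w)\}$, handles extender sets.

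For the quantitative bounds, I would examine the terminal cycle of the orbit, on which $T$ acts as a bijection. A naive pigeonhole already yields period at most $|2^{\mathcal{F}}| = 2^{p_0 - 1}$ and transient at most $2^{p_0 - 1}$; to obtain the cleaner factorial bound, I plan to track, for each $V \in \mathcal{F}$, the cyclic pattern of whether $V \in F_X(\ell)$ as $\ell$ ranges over the cycle, and argue that $T$ permutes these patterns in a way controlled by a permutation of an at-most-$p_0$-element set, whose order divides $p_0!$. The transient bound $p_0(1 + p_0!)$ then arises by bounding the length of the pre-cycle segment in terms of $p_0$ and the eventual period.

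The main obstacle is sharpening the easy exponential period bound to the factorial bound $p_0!$ (resp.\ $p!$ for extender sets). The finite-orbit principle is immediate once determinism is in hand, but extracting the specific permutation-theoretic factor $p_0!$ requires a genuinely structural argument about how $T$ acts on its terminal cycle, rather than just counting subsets.
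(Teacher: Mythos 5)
Your approach is correct, but it is genuinely different from the paper's. The paper proves a pumping lemma: any $w$ of length $\geq p$ has two prefixes $x$ and $xy$ with the same extender set, whence $E_X(xy^iz)=E_X(w)$ for all $i$; taking $k$ to be the least common multiple of the pump lengths $|y_w|$ over all $w$ of length exactly $p$ (so $k\leq p!$) gives $E_X(\ell)\subseteq E_X(\ell+k)$ for all $\ell\geq p$, and since this nested chain can grow fewer than $p$ times before stabilizing, periodicity with period $k$ sets in before term $p(1+p!)$. You instead encode the one-step determinism $F_X(wa)=\{u: au\in F_X(w)\}$ (and its extender-set analogue) as a self-map $T$ of $2^{\mathcal{F}}$ satisfying $F_X(\ell+1)=T(F_X(\ell))$, and invoke eventual periodicity of orbits of self-maps of finite sets; this is sound (the partial map $\tau$ is well-defined on follower sets because $wa\in L(X)$ iff $\{u:au\in F_X(w)\}\neq\emptyset$, which depends only on $F_X(w)$ and $a$). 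Your argument is more automatic and in fact gives sharper bounds, while the paper's yields the extra structural information that the sets $E_X(\ell+jk)$ are nested increasing in $j$ and exhibits the period concretely as an lcm of pumping lengths. The one substantive correction: the ``main obstacle'' you identify is not an obstacle, because you have the comparison between the exponential and factorial bounds backwards. Since $2^{N}\leq (N+1)!$ for all $N\geq 0$, your naive orbit bounds already give least eventual period $\leq 2^{p_0-1}\leq p_0!$ and a transient of length $\leq 2^{p_0-1}\leq p_0! < p_0(1+p_0!)$, and likewise $2^{p-1}\leq p!$ for extender sets; these imply (indeed improve on) the stated bounds, so the permutation-theoretic refinement you sketch --- which as written is only a plan, not a proof --- can simply be deleted.
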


For simple examples, the follower set sequence and extender set sequence of a sofic shift are eventually constant, but in fact, sequences may be realized which are merely eventually periodic. This is in contrast to complexity sequences of shifts, which must be nondecreasing.  Martin Delacourt discovered the first such example in 2013. (see page 8 of ~\cite{OrmesPavlov}) In fact, a wide class of eventually periodic sequences may be realized:
 
\begin{theorem}\label{mainthm}
Let $n \in \mathbb{N}$, and $\mathcal{A} = \{A_1, A_2, A_3, ..., A_k\}$ be a nontrivial partition of $\{0, 1, ..., n-1\}$. Let $0 = r_1 < r_2< ... < r_k$ be natural numbers. Then there exists $m \in \mathbb{N}$ and an irreducible graph $\mathcal{G}$ such that the number of follower sets in $X_\mathcal{G}$ of words of length $\ell$ where $\ell \geq n+2$ and $\ell \pmod n \in A_j$ will be exactly $m + r_j$ for all $1 \leq j \leq k$. Furthermore, $m$ may be chosen such that $m < (6n+3)r_k$.
\end{theorem}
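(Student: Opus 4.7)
My plan is to construct the irreducible graph $\mathcal{G}$ explicitly, by attaching $r_k$ carefully-designed gadgets to a small ``clock'' subgraph that carries the mod-$n$ periodicity. I would work throughout with a labeled graph presentation that is both right-resolving and follower-separated, so that the follower set of a length-$\ell$ word $w$ is uniquely determined by the terminal vertex of the unique path reading $w$. Under this reduction, $|F_{X_{\mathcal{G}}}(\ell)|$ becomes the number of distinct vertices that occur at the end of some length-$\ell$ path in the presentation, and establishing the counts $m + r_j$ reduces to a concrete reachability calculation.

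The clock core is a directed $n$-cycle $v_0 \to v_1 \to \cdots \to v_{n-1} \to v_0$ with a common edge label, augmented by a few auxiliary edges and vertices that (i) guarantee irreducibility and (ii) give a fixed baseline of $m$ reachable core vertices at every length $\ell \ge n+2$. On top of this core I would attach $r_k$ small pendant gadgets $G_1, \ldots, G_{r_k}$, with $G_t$ designed to be \emph{active} (contributing exactly one new reachable vertex, hence one new follower set) precisely at residues in $S_t := \bigcup_{\{j\,:\,r_j \ge t\}} A_j$. Because $0 = r_1 < r_2 < \cdots < r_k$, a residue $i \in A_j$ activates exactly the first $r_j$ gadgets, contributing $r_j$ new follower sets on top of the baseline $m$, which is exactly the target count.

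The main obstacle is making the gadget contributions \emph{exactly additive}: each active gadget must create a follower set genuinely distinct from all core follower sets and from all other gadget contributions, while inactive gadgets must not alter the count at all. I would handle this by equipping each gadget with a distinguishing ``signature'' symbol, so that any word ending inside the interior of $G_t$ must end with the signature of $G_t$; follower-separation then forces pairwise distinctness of the new follower sets. Irreducibility is preserved by giving each gadget both an entry edge from and a return edge to the clock cycle, with the return edge placed to determine the activity schedule $S_t$. Finally, a direct vertex count (the clock contributes $O(n)$ vertices, each gadget contributes a bounded number of vertices with $O(n)$ attachment overhead) gives $m < (6n+3)r_k$; the explicit constant $6n+3$ is a strong indication that a per-gadget overhead of roughly six clock positions plus a small additive constant suffices, which will fall out of the bookkeeping once the precise shape of the gadgets is fixed.
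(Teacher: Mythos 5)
There is a genuine gap, and it sits at the very foundation of your plan: the claim that in a right-resolving, follower-separated presentation, ``the follower set of a length-$\ell$ word $w$ is uniquely determined by the terminal vertex of the unique path reading $w$,'' so that $|F_{X_\mathcal{G}}(\ell)|$ counts reachable terminal vertices. Right-resolving only guarantees at most one path labeled $w$ \emph{from each fixed starting vertex}; a word may be read from several starting vertices, and its follower set is determined by the \emph{set} of all terminal vertices of paths labeled $w$. This is not a technicality --- it is the entire mechanism of the theorem. In any irreducible graph, once $\ell$ is large enough, \emph{every} vertex is the terminal vertex of some length-$\ell$ path, so under your reduction $|F_{X_\mathcal{G}}(\ell)|$ would be eventually constant and equal to $|V(\mathcal{G})|$, which contradicts the non-constant oscillation you are trying to produce. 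No choice of clock and gadgets can rescue a count of singly-reachable vertices. The oscillation has to come from \emph{ambiguous} words whose set of terminal vertices has size at least two and whose existence in the language depends on $\ell \bmod n$; in the paper's building block $\mathcal{G}_{n,S}$ this is the word $ba^{\ell-2}c_i$, which is readable from two starting loops and exists precisely when $\ell \bmod n \in S$, thanks to a three-loop structure of $n$-cycles labeled $a$. Your gadgets, as described, contribute one new \emph{reachable vertex} when active, which is the wrong kind of object.

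A secondary problem is exact additivity. You acknowledge it, but your fix (a signature symbol per gadget) addresses distinctness of the gadget follower sets, not non-interference: with a single shared clock carrying a common edge label, a word such as $a^\ell$ is read simultaneously around the clock near every gadget, so its follower set entangles all gadgets at once, and the count does not decompose as baseline plus a sum of independent gadget contributions. The paper avoids this by never sharing labels: it builds $r_k$ complete copies of the graphs $\mathcal{G}_{n,A_i\cup\cdots\cup A_k}$ over pairwise disjoint alphabets and glues them at the anchoring vertices of bi-synchronizing self-loops (Theorem~\ref{Lemma2}), proving that follower set counts add exactly under this operation; a residue in $A_j$ then lies in exactly $r_j$ of the activating sets, yielding $m+r_j$, and the bound $m<(6n+3)r_k$ comes from each copy contributing at most $3n+3|S|+3<6n+3$ baseline follower sets. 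Your activation combinatorics ($S_t=\bigcup_{\{j\,:\,r_j\ge t\}}A_j$) is the same as the paper's and is fine; it is the per-gadget mechanism and the additivity argument that need to be rebuilt.
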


We also prove a similar result for extender sets:

\begin{theorem}\label{ESmain}
Let $n \in \mathbb{N}$, and $\mathcal{A} = \{A_1, A_2, A_3, ..., A_k\}$ be a nontrivial partition of $\{0, 1, ..., n-1\}$. Let $0 = r_1 < r_2< ... < r_k$ be natural numbers. Then there exists $m \in \mathbb{N}$ and an irreducible graph $\mathcal{G}$ such that the number of extender sets in $X_\mathcal{G}$ of words of length $\ell$ where $\ell \geq 14r_kn - 1$ and $\ell \pmod n \in A_j$ will be exactly $m + r_j$ for all $1 \leq j \leq k$. Furthermore, $m$ may be chosen such that $m \leq 39n^2r_k^2$.
\end{theorem}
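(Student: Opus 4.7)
The plan is to adapt the construction from Theorem \ref{mainthm} so that the \emph{extender} set sequence, rather than the follower set sequence, realizes the prescribed eventually periodic pattern. It will be convenient to work with a bi-resolving labelled presentation $(\mathcal{G},\mathcal{L})$, because then the extender set of a word $w$ corresponds bijectively to the set of achievable pairs of states $(p,q)$ in $\mathcal{G}$ such that some path presenting $w$ starts at $p$ and ends at $q$. The goal is therefore to design $\mathcal{G}$ so that, for words of length $\ell \equiv A_j \pmod n$ (with $\ell$ sufficiently large), the number of such achievable state pairs is exactly $m+r_j$.

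First I would inspect the graph produced by the proof of Theorem \ref{mainthm} and ask whether, for words long enough, the predecessor side is already determined, i.e.\ whether every long word has a unique incoming state in some right-resolving presentation. If so, the extender set count collapses to the follower set count and the construction essentially carries over with slightly worse constants. If not, I would augment the graph with a past-synchronizing gadget: a collection of branches whose labels uniquely determine the incoming state after sufficiently many symbols have been read, attached back to the main length-$n$ cycle so as to preserve irreducibility.

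The main obstacle is the simultaneous exactness of the counts $m + r_j$ across all residues $j$. In the follower set setting, exactness is obtained by carefully tracking which words become synchronizing at which phases of the main cycle; for extender sets this bookkeeping becomes two-dimensional, since one must control both incoming and outgoing states. I expect that each follower-set gadget has an extender-set analog whose contribution to $|E_{X_\mathcal{G}}(\ell)|$ grows roughly as the product of its future-side and past-side complexities. This product structure is what accounts for the quadratic bound $m \leq 39 n^2 r_k^2$, and the need to activate the gadget on \emph{both} sides is what forces the length threshold $\ell \geq 14 r_k n - 1$, which is linear in $r_k n$ rather than merely in $n$ as in Theorem \ref{mainthm}.

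Finally I would verify the three requirements separately: (i) irreducibility of $\mathcal{G}$ by exhibiting a cycle through every vertex that uses the main length-$n$ cycle and the auxiliary gadgets; (ii) the exact equality $|E_{X_\mathcal{G}}(\ell)| = m + r_j$ for $\ell \geq 14 r_k n - 1$ with $\ell \pmod n \in A_j$, by a case analysis enumerating which achievable state pairs persist at each phase; and (iii) the explicit bound $m \leq 39 n^2 r_k^2$ by counting the number of state pairs contributed by each portion of the graph. None of these steps should require anything beyond careful combinatorial bookkeeping, but it is exactly in that bookkeeping that the ingenuity of the construction will lie, since the constants $14$ and $39$ must come out of choices made to balance the sizes of the future-side and past-side gadgets against the length of the main cycle.
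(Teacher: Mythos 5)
Your proposal is a plan rather than a proof, and the plan defers exactly the parts that carry the content (``it is exactly in that bookkeeping that the ingenuity of the construction will lie''). More importantly, the guesses you make about where the work lies do not match how the argument actually goes, and some would lead you astray. There is no need for a ``past-synchronizing gadget'': the building blocks $\mathcal{G}_{n,S}$ are already left-resolving, right-resolving, and extender-separated, and Theorem~\ref{Lemma1} already computes their extender set counts exactly, showing each oscillates by exactly $1$ between residues in $S$ and residues not in $S$ (the only non-bi-synchronizing long words are $a^\ell$ and its one-letter extensions, and these are enumerated directly). Likewise, Theorem~\ref{Lemma2} already gives the extender-set version of the gluing formula, $|E_{X_\mathcal{G}}(\ell)| = |E_{X_{\mathcal{G}_1}}(\ell)| + |E_{X_{\mathcal{G}_2}}(\ell)| + 2|V(\mathcal{G}_1)|\cdot|V(\mathcal{G}_2)|$; the additive constant is independent of $\ell$, which is precisely why the oscillation amounts $r_j$ survive repeated gluing while only $m$ grows. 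So the proof of Theorem~\ref{ESmain} is \emph{not} a redesign of the graph; it is the same construction as Theorem~\ref{mainthm}, plus two quantitative arguments you have not supplied.

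Those two arguments are where your attributions go wrong. The quadratic bound $m \leq 39 n^2 r_k^2$ does not come from a ``product of future-side and past-side complexities'' of a single gadget; it comes from (i) a single $\mathcal{G}_{n,S}$ having at most $6n$ vertices and hence at most $(3n+2|S|+1)^2 + |S| + 2 \leq 39n^2$ extender sets, and (ii) the cross terms $2|V(\mathcal{G}_1)||V(\mathcal{G}_2)| \leq 2(6in)(6n)$ accumulating over the $r_k$ gluings so that the induction closes via $39n^2 i^2 + 39n^2 + 2i(39n^2) = 39n^2(i+1)^2$. The threshold $14 r_k n - 1$ has nothing to do with ``activating the gadget on both sides''; it comes from controlling the primitivity distance of the glued graph, since Theorem~\ref{Lemma2}'s extender-set formula only holds for $\ell$ exceeding twice the larger primitivity distance of the two pieces. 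The paper joins the $r_k$ copies in a balanced, binary-tree order so that the bound $a_i$ satisfies $a_{i+1} = 2a_i + 1$ with $a_1 = 7n-1$, giving $a_{\log_2 r_k + 1} = 7nr_k - 1$ for $r_k$ a power of $2$ and $14 r_k n - 1$ in general. Without these two computations your proposal establishes neither the stated length threshold nor the stated bound on $m$, so as written it has a genuine gap.
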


The proofs of Theorems~\ref{mainthm} and~\ref{ESmain} are broken into two parts. First, we define a process, given $n \in \mathbb{N}$ and $S \subset \{0, 1, ... , n-1\}$, of constructing a graph $\mathcal{G}_{n,S}$ which gives words of length $\ell \pmod n \in S$ one greater follower and extender set than words of length $\ell \pmod n \notin S$ whenever $\ell$ is sufficiently large. We then describe a method of combining these graphs which results in a new shift, where for each $\ell$, the number of follower or extender sets of words of length $\ell$ is the sum of the number of follower or extender sets of words of length $\ell$ in each of the original shifts, plus a constant which does not depend on $\ell$. Combining these two propositions proves the result.

Finally, while non-sofic shifts always have follower set sequences and extender set sequences which go to infinity, we show that they need not do so in a monotone increasing fashion:

\begin{theorem}\label{Non-sofic}
There exists an irreducible non-sofic shift $X$ such that $\{|F_X(\ell)|\}_{\ell \in \mathbb{N}}$ and $\{|E_X(\ell)|\}_{\ell \in \mathbb{N}}$ are not monotone increasing.
\end{theorem}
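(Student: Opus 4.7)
The strategy is to adjoin a non-sofic ``appendix'' to a sofic shift that already has non-monotone follower and extender set sequences, producing an irreducible non-sofic shift that inherits the non-monotonicity. By Theorem~\ref{mainthm} applied with $n=2$, $\mathcal{A}=\{\{0\},\{1\}\}$, $r_1=0$, $r_2=1$, there is an irreducible sofic shift $Y$ whose follower set sequence alternates between $m$ and $m+1$ for $\ell\geq 4$; by combining this construction with that of Theorem~\ref{ESmain} (for instance via a product construction, or by directly verifying that the same graph yields an oscillating extender set sequence), one may further assume $\{|E_Y(\ell)|\}$ is also non-monotone. Fix an irreducible non-sofic shift $Z$ on an alphabet disjoint from $\Sigma_Y$, for example an $S$-gap shift with $S$ a very sparse non-eventually-periodic set such as $\{2^{2^k}\}$.

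Introduce a fresh marker symbol $*$ and define $X$ on $\Sigma_Y\sqcup\Sigma_Z\sqcup\{*\}$ so that allowed points are bi-infinite concatenations of finite blocks drawn from $\mathcal{L}(Y)\cup\mathcal{L}(Z)$ interleaved with nonempty runs of $*$'s. Irreducibility of $X$ is immediate from the marker mechanism, and non-soficity follows because length-$\ell$ $Z$-words appearing between two $*$'s have follower sets in $X$ in bijection with their follower sets in $Z$, forcing $|F_X(\ell)|\to\infty$ (and similarly for extender sets).

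To prove non-monotonicity, classify length-$\ell$ words of $X$ into three types: those lying entirely in a $Y$-block, those entirely in a $Z$-block, and those containing at least one $*$. For sufficiently large $\ell$, show that the first type contributes exactly $|F_Y(\ell)|$ distinct follower sets in $X$ (every such word has the same universal ``continue into $*$ and beyond'' tail appended to its $Y$-continuations), the second contributes $|F_Z(\ell)|$, and the third contributes a count that splits as a constant coming from the $Y$-prefixes (since $Y$ is sofic) plus a piece coming from $Z$-prefixes whose growth rate is bounded by $1$ per increment in $\ell$ under the sparsity choice for $S$. The $\pm 1$ oscillation of $|F_Y(\ell)|$ then dominates the monotone, slow-growing contribution from the $Z$-side, producing infinitely many $\ell$ with $|F_X(\ell+1)|<|F_X(\ell)|$. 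The argument for $\{|E_X(\ell)|\}$ is symmetric, using the two-sided decomposition of points at markers and the extender-set analog of the sofic building block.

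The main obstacle is the bookkeeping in the classification step: one must rule out unexpected follower- or extender-set coincidences across the three word classes and control the marker-containing contribution tightly enough that the $Y$-oscillation is not masked. This can be arranged by choosing $Y$ and $Z$ on disjoint alphabets so that any continuation in $X$ reveals in its first non-$*$ symbol which of the two building-block shifts its subsequent block belongs to, ensuring pure-$Y$ words and pure-$Z$ words never share a follower or extender set in $X$, and by calibrating the growth of $|F_Z(\ell)|$ and $|E_Z(\ell)|$ via the sparseness of $S$.
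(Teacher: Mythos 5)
Your construction takes a genuinely different route from the paper's (which forms the direct product of $X_{\mathcal{G}_{n,S}}$ with a Sturmian shift), but as written it has a fatal flaw: the marker construction makes the follower-set count at length $\ell$ \emph{cumulative} over all shorter lengths, which erases the oscillation you are trying to preserve. Concretely, a length-$\ell$ word of your third class may end in $*v$ where $v$ is a $Y$-word of any length $j\leq \ell-1$, and $F_X(*v)=F_X(v)$ (what can follow $v$ depends only on its completions to a block, which is the same whether $v$ is a block prefix or a block infix). So the set of follower sets realized by words of length $\ell$ contains the follower sets of \emph{all} $Y$-words of length at most $\ell$; the $Y$-contribution to $|F_X(\ell)|$ is $\bigl|\bigcup_{j\leq \ell}F_Y(j)\bigr|$, which is nondecreasing and eventually constant since $Y$ is sofic. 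The $\pm 1$ oscillation of $|F_Y(\ell)|$ simply disappears. Even if you could somehow restrict to ``fresh'' suffixes, a second problem remains: your combination is additive, and for an $S$-gap shift with $S$ infinite one computes $|F_Z(\ell)|=\ell+1$ exactly (one follower set for each trailing-zero count $j\in\{0,\dots,\ell-1\}$ of a synchronized word, all distinct because $S$ is not eventually periodic, plus $F(0^\ell)$). This grows by exactly $1$ at \emph{every} step, so even a clean decomposition $|F_X(\ell)|=|F_Y(\ell)|+|F_Z(\ell)|+O(1)$ would give increments $(\pm 1)+1\geq 0$: the drop of $1$ in the sofic factor is exactly cancelled. ``Growth bounded by $1$ per increment'' is not sufficient; you would need growth equal to $0$ at infinitely many lengths in the correct residue class, which your choice of $Z$ does not provide.

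The paper avoids both problems by using a product rather than a concatenation-with-markers: for $X=X_{\mathcal{G}_{n,S}}$ and $Y$ Sturmian, every length-$\ell$ word of $X\times Y$ projects to length-$\ell$ words in each factor (no cumulative effect), and the counts \emph{multiply}: $|E_{X\times Y}(\ell)|=|E_X(\ell)|\cdot|E_Y(\ell)|$ with $|E_Y(\ell)|=\ell+1$. Then at a drop one compares $(m+1)(\ell+1)$ with $m(\ell+2)$, and the difference is $m+\ell+1-2m=\ell+1-m>0$ once $\ell>m-1$; multiplicativity is precisely what lets the size-one oscillation overcome the unavoidable linear growth of the non-sofic factor. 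If you want to salvage your approach, the most direct fix is to replace the marker construction by this product construction; alternatively you would have to both prevent words from ``remembering'' shorter suffixes and exhibit a non-sofic $Z$ whose follower- and extender-set counts are constant on infinitely many steps of the appropriate residue class, which is a substantially harder ask.
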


\section{Definitions and preliminaries}
\label{defns}
Let $A$ denote a finite set, which we will refer to as our alphabet. 

\begin{definition}
A \textbf{word} over $A$ is a member of $A^n$ for some $n \in \mathbb{N}$.
\end{definition}

\begin{definition} For any words $v \in A^n$ and $w \in A^m$, we define the \textbf{concatenation} $vw$ to be the pattern in $A^{n+m}$ whose first $n$ letters are the letters forming $v$ and whose next $m$ letters are the letters forming $w$.
\end{definition}

\begin{definition} 
The \textbf{language} of a $\mathbb{Z}$ shift $X$, denoted by $L(X)$, is the set of all words which appear in points of $X$. For any finite $\ell \in \mathbb{N}$, $L_\ell(X) := L(X) \cap A^\ell$, the set of words in the language of $X$ with length $\ell$.
\end{definition}

\begin{definition}
For any one-dimensional shift $X$ over the alphabet $A$, and any word $w$ in the language of $X$, we define the \textbf{follower set of w in $X$}, $F_X(w)$, to be the set of all one-sided infinite sequences $s \in A^\mathbb{N}$ such that the infinite word $ws$ occurs in some point of $X$.
\end{definition}

\begin{definition}
For any one-dimensional shift $X$ over the alphabet $A$, and any word $w$ in the language of $X$, we define the \textbf{extender set of w in $X$}, $E_X(w)$, to be the set of all pairs $(s, u)$ where $s$ is a left-infinite sequence of symbols in $A$, $u$ is a right-infinite sequence of symbols in $A$, and $swu$ is a point of $X$.
\end{definition}

\begin{remark}
For any word $w \in L(X)$, define a projection function $f_w:E_X(w) \rightarrow F_X(w)$ by $f(s,u) = u$. Such a function sends the extender set of $w$ onto the follower set of $w$. Any two words $w, v$ with the same extender set would have the property then that $f_w(E_X(w))= f_v(E_X(v))$, that is, that $w$ and $v$ have the same follower set.
\end{remark}

\begin{definition}
For any positive integer $\ell$, define the set $F_X(\ell) = \{F_X(w) \ | \ w \in L_{\ell}(X)\}$. Thus the cardinality $|F_X(\ell)|$ is the number of distinct follower sets of words of length $\ell$ in $X$. Similarly, define $E_X(\ell) = \{E_X(w) \ | \ w \in L_\ell(X)\}$, so that $|E_X(\ell)|$ is the number of distinct extender sets of words of length $\ell$ in $X$.
\end{definition}

\begin{definition}
Given a shift $X$, the \textbf{follower set sequence of $X$} is the sequence $\{|F_X(\ell)|\}_{\ell \in \mathbb{N}}$. The \textbf{extender set sequence of $X$} is the sequence $\{|E_X(\ell)|\}_{\ell \in \mathbb{N}}$.
\end{definition}

\begin{example}
Let $X$ be a full shift on the alphabet $A$. Then any word $w \in L(X)$ may be followed legally by any sequence in $A^{\mathbb{N}}$, and thus the follower set of any word is the same. Hence there is only one follower set in a full shift. Similarly, there is only one extender set in a full shift. Then $\{|F_X(\ell)|\}_{\ell \in \mathbb{N}} = \{|E_X(\ell)|\}_{\ell \in \mathbb{N}} = \{1, 1, 1, ...\}$.
\end{example}

\begin{example}
The even shift is the one-dimensional sofic shift with alphabet $\{0,1\}$ defined by forbidding odd runs of zeros between ones. It is a simple exercise to show that the even shift has three follower sets, $F(0), F(1),$ and $F(10)$. The follower set sequence of the even shift is $\{|F_X(\ell)|\}_{\ell \in \mathbb{N}} = \{2, 3, 3, 3, ...\}$. It is easy to verify that for any word $w$ in the language of the even shift, the follower set of $w$ is identical to the follower set of one of these three words.
\end{example}

\begin{definition}
A one-dimensional shift $X$ is \textbf{sofic} if it is the image of a shift of finite type under a continuous shift-commuting map. 
\end{definition}

Equivalently, a shift $X$ is sofic iff there exists a finite directed graph $\mathcal{G}$ with labeled edges such that the points of $X$ are exactly the sets of labels of bi-infinite walks on $\mathcal{G}$. Then $\mathcal{G}$ is a \textbf{presentation} of $X$ and we say $X = X_\mathcal{G}$ (or that $X$ is the \textbf{edge shift} presented by $\mathcal{G}$). Another well-known equivalence is that sofic shifts are those with only finitely many follower sets, that is, a shift $X$ is sofic iff $\{F_X(w) \ | \ w \text{ in the language of $X$} \}$ is finite. The same equivalence exists for extender sets: X is sofic iff $\{E_X(w) \ | \ w \text{ in the language of $X$} \}$ is finite. (see ~\cite{OrmesPavlov}) This necessarily implies that for a sofic shift $X$, the follower set sequence and extender set sequence of $X$ are bounded. In fact, the converse is also true: if the follower set or extender set sequence of a shift $X$ is bounded, then $X$ is necessarily sofic. (See ~\cite{OrmesPavlov})

\begin{definition}
A directed labeled graph $\mathcal{G}$ is \textbf{irreducible} if for every ordered pair $(I, J)$ of vertices in $\mathcal{G}$, there exists a path in $\mathcal{G}$ from $I$ to $J$. 
\end{definition}

Results about shifts presented by graphs which are not irreducible may often be found by considering the reducible graph's irreducible components; for this reason, results in Sections ~\ref{followers} and ~\ref{XcrossY} of this paper will focus on the irreducible case.

\begin{definition}
A directed labeled graph $\mathcal{G}$ is \textbf{primitive} if $\exists \> N \in \mathbb{N}$ such that for every $n \geq N$, for every ordered pair $(I, J)$ of vertices in $\mathcal{G}$, there exists a path in $\mathcal{G}$ from $I$ to $J$ of length $n$. The least such $N$ is the \textbf{primitivity distance} for $\mathcal{G}$.
\end{definition}

\begin{definition}
A directed labeled graph $\mathcal{G}$ is \textbf{right-resolving} if for each vertex $I$ of $\mathcal{G}$, all edges starting at $I$ carry different labels. Similarly, $\mathcal{G}$ is \textbf{left-resolving} if for each vertex $I$ of $\mathcal{G}$, all edges ending at $I$ carry different labels.
\end{definition}

\begin{definition}
A directed labeled graph $\mathcal{G}$ is \textbf{follower-separated} if distinct vertices in $\mathcal{G}$ correspond to distinct follower sets--for all vertices $I, J$ in $\mathcal{G}$, there exists a one-sided infinite sequence $s$ of labels which may follow one vertex but not the other. 
\end{definition}

\begin{definition}
A directed labeled graph $\mathcal{G}$ is \textbf{extender-separated} if distinct pairs of vertices correspond to distinct extender sets--for any two distinct pairs of initial and terminal vertices $\{I \rightarrow I'\}$ and $\{J \rightarrow J'\}$ such that there exist paths in $\mathcal{G}$ from $I$ to $I'$ and from $J$ to $J'$, there exists some word $w$ which is the label of a path in $\mathcal{G}$ beginning and ending with one pair of vertices, and pair $(s,u)$, $s$ a left-infinite sequence, $u$ a right-infinite sequence, such that $swu$ is a point of $X_\mathcal{G}$, but for every word $v$ which is the label of some path beginning and ending with the other pair of vertices, $svu$ is not a point of $X_\mathcal{G}$.
\end{definition}

\begin{definition} 
Given a directed labeled graph $\mathcal{G}$, a word $w$ is \textbf{right-synchronizing} if all paths in $\mathcal{G}$ labeled $w$ terminate at the same vertex. The word $w$ is \textbf{left-synchronizing} if all paths in $\mathcal{G}$ labeled $w$ begin at the same vertex. The word $w$ is \textbf{bi-synchronizing} if $w$ is both left- and right-synchronizing. A \textbf{bi-synchronizing letter} is a bi-synchronizing word of length 1.
\end{definition}

In fact, every one-dimensional sofic shift has a presentation $\mathcal{G}$ which is right-resolving, follower-separated, and contains a right-synchronizing word. (See ~\cite{LindMarcus})

\section{Eventual Periodicity of Follower and Extender Set Sequences}
\label{periodicity}

First, we show that only eventually periodic sequences may appear as follower set sequences  or extender set sequences of one-dimensional sofic shifts. To establish this result, we first need a lemma which is reminiscent of the pumping lemma. (See ~\cite{Lawson})

\begin{lemma}\label{pumping}
Let $X$ be a sofic shift, define $p$ to be one greater than the total number of extender sets in $X$, and define $p_0$ to be one greater than the total number of follower sets in $X$. Then all words $w$ in $L(X)$ of length $\ell \geq p $ may be written as $w = xyz$, where $|y| \geq 1$ and the word $xy^iz$ has the same extender set as the word $w$ for all $i \in \mathbb{N}$. Furthermore, all words $w$ in $L(X)$ of length $\ell \geq p_0$ may be written as $w = xyz$, where $|y| \geq 1$ and the word $xy^iz$ has the same follower set as the word $w$ for all $i \in \mathbb{N}$.
\end{lemma}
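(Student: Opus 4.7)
The plan is a direct pigeonhole argument on prefixes, in the spirit of the classical pumping lemma for regular languages. Let $w = w_1 w_2 \cdots w_\ell$ with $\ell \geq p$, and consider the $\ell + 1$ prefixes $u_k := w_1 \cdots w_k$ for $k = 0, 1, \ldots, \ell$, each a word in $L(X)$ with a well-defined extender set. Since there are only $p-1$ distinct extender sets in $X$ and $\ell + 1 \geq p + 1 > p-1$, the pigeonhole principle supplies indices $0 \leq j < k \leq \ell$ with $E_X(u_j) = E_X(u_k)$. Setting $x := u_j$, $y := w_{j+1} \cdots w_k$, and $z := w_{k+1} \cdots w_\ell$ yields a decomposition $w = xyz$ with $|y| = k-j \geq 1$ and with $E_X(x) = E_X(xy)$.

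The remaining step is to propagate this local equality into the global statement $E_X(xy^i z) = E_X(w)$. The key observation is that equality of extender sets is preserved under concatenation on the right: for any words $a, b, c$ with $E_X(a) = E_X(b)$, and any admissible pair $(s,t)$ consisting of a left-infinite and right-infinite sequence, one has $(s,t) \in E_X(ac)$ iff $sact \in X$ iff $(s, ct) \in E_X(a) = E_X(b)$ iff $(s,t) \in E_X(bc)$. Applying this with $a = x$ and $b = xy$ and iterating gives $E_X(xy^i) = E_X(x)$ for every $i \geq 0$ by induction on $i$, and then one more application with $c = z$ delivers $E_X(xy^i z) = E_X(xz) = E_X(xyz) = E_X(w)$ for all $i \in \mathbb{N}$.

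The follower set statement is proved by the identical argument, with $F_X$ replacing $E_X$ and $p_0$ replacing $p$ throughout; the required propagation fact is that $F_X(a) = F_X(b)$ implies $F_X(ac) = F_X(bc)$ for any word $c$, which follows from $t \in F_X(ac)$ iff $ct \in F_X(a)$. There is no real obstacle to overcome; the only mild subtlety is checking that equality of extender (resp.\ follower) sets is preserved by right-concatenation, but this is immediate from unwinding the definitions.
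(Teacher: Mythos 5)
Your proposal is correct and follows essentially the same route as the paper's own proof: pigeonhole on the prefixes of $w$ to find $x$ and $xy$ with equal extender sets, then propagate via the observation that equality of extender sets is preserved by right-concatenation. The only cosmetic difference is that you isolate the right-concatenation step as a standalone fact and include the empty prefix $u_0$ in the pigeonhole count (which is unnecessary, since the $\ell \geq p$ nonempty prefixes already suffice and avoid any quibble about the extender set of the empty word).
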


\begin{proof}
Since $X$ is sofic, $X$ has only finitely many extender sets. Let $p$ be one greater than the number of extender sets in $X$. Since $X$ is represented by a finite labeled graph $\mathcal{G}$, and there are only $|V(\mathcal{G})|^2$ possible pairs of vertices in $\mathcal{G}$, and each extender set must correspond to a non-empty set of pairs, we have that $p \leq 2^{(|V(\mathcal{G})|^2)}$. Let $w$ be a word in $L(X)$ of length $\ell \geq p$. Consider the prefixes of $w$. Since $w$ is of length at least $p$, there exist two prefixes of $w$ (one necessarily a strict subword of the other) with the same extender set, say $x$ and $xy$, where $|y| \geq 1$. Then for any pair $(s, u)$ of infinite sequences, $sxu$ is a point of $X$ if and only if $sxyu$ is a point of $X$ also. Call the remaining portion of $w$ by $z$ so that $w = xyz$. (We may have $|z| = 0$).\newline
\indent Now, let $(s, u)$ be in the extender set of $w$, that is, that $swu$ is a point of $X$. But $swu = sxyzu$, so $(s, yzu)$ is in the extender set of $x$. By above, then, $(s, yzu)$ is in the extender set of $xy$ also, that is, that $sxyyzu$ is a point of $X$. Hence, $(s, u)$ is in the extender set of $xyyz$. So $E_X(xyz) \subseteq E_X(xyyz)$. On the other hand, if $(s, u)$ is in the extender set of $xyyz$, then $sxyyzu$ is a point of $X$, and so $(s, yzu)$ is in the extender set of $xy$, and therefore the extender set of $x$. Thus $sxyzu$ is a point of $X$, and so $(s, u)$ is in the extender set of $xyz = w$. Therefore $E_X(xyz) = E_X(xyyz)$. Applying this argument repeatedly gives that $E_X(xyz) = E_X(xy^iz)$ for any $i \in \mathbb{N}$. \newline
\indent Letting $p_0$ be one greater than the total number of follower sets in $X$, and identical argument gives the corresponding result for follower sets.
\end{proof}

We use this lemma to establish Theorem~\ref{eventuallyperiodic}, that all follower set and extender set sequences of one-dimensional sofic shifts must be eventually periodic:

\begin{proof}[Proof of Theorem~\ref{eventuallyperiodic}]
Let $X$ be a one-dimensional sofic shift. We prove that the sequences $\{F_X(\ell)\}$ and $\{E_X(\ell)\}$ (that is, the sequences which record not only the number of follower sets of each length, but also the identities of those sets) are eventually periodic, which will trivially imply eventual periodicity of $\{|F_X(\ell)|\}$ and $\{|E_X(\ell)|\}$. \newline
\indent Let $w$ be a word of length $p$ in $L(X)$, where $p$ is defined as in Lemma~\ref{pumping}. Then by Lemma~\ref{pumping}, $w = xy_wz$ where $|y_w| \geq 1$ and $xy_w^iz$ has the same extender set as $w$ for all $i \in \mathbb{N}$. Let $k = lcm\{|y_w|\> | \> w \in L_p(X)\}$. Since the longest such a word $y_w$ could be is $p$, we have $k \leq p!$. Clearly for any word $w \in L_p(X)$, there is an $i \in \mathbb{N}$ such that $xy_w^iz \in L_{p+k}(X)$. Therefore, every extender set in $E_X(p)$ is also an extender set in $E_X(p+k)$, so $E_X(p) \subseteq E_X(p+k)$. \newline
\indent Now, let $w$ be a word of length $\ell > p$ in $L(X)$. Then $w$ has some word $w' = w_1...w_p \in L_p(X)$ as a prefix. Applying Lemma ~\ref{pumping} to $w'$ as above, we get a word $w''$ of length $p+k$ with the same extender set as $w'$. If $(s,u)$ is in the extender set of $w = w'w_{p+1}...w_\ell$, then $sw'w_{p+1}...w_\ell u$ is a point of $X$, and so $(s, w_{p+1}...w_\ell u)$ is in the extender set of $w'$. Since $w'$ and $w''$ have the same extender set, $sw''w_{p+1}...w_\ell u$ is a point of $X$, and $(s,u)$ is in the extender set of $w''w_{p+1}...w_\ell$. Similarly, if $(s,u)$ is in the extender set of $w''w_{p+1}...w_\ell$, then $(s,u)$ is in the extender set of $w$ as well. Therefore $w''w_{p+1}...w_\ell$ is a word in $X$ of length $\ell + k$ with the same extender set as $w$. Hence, every extender set in $E_X(\ell)$ is an extender set in $E_X(\ell + k)$. So $E_X(\ell) \subseteq E_X(\ell + k)$ for any $\ell \geq p$. \newline
\indent But sofic shifts only have finitely many extender sets, so eventually, the sequence $\{|E_X(\ell + jk)|\}_{j \in \mathbb{N}}$ must stop growing. Thus, we have $E_X(\ell) = E_X(\ell+k)$ for all sufficiently large $\ell$, and the sequence $\{E_X(\ell)\}$ is eventually periodic with period $k$, where $k \leq p!$. Certainly, this implies that the extender set sequence is eventually periodic with period $k$ as well. \newline
\indent Suppose $\ell \geq p$ and $E_X(\ell) = E_X(\ell+k)$. Then we claim that $E_X(\ell+k) = E_X(\ell+2k)$: By above, we have $E_X(\ell+k) \subseteq E_X(\ell+2k)$. Suppose $w$ is a word of length $\ell + 2k$, say $w = w_1w_2...w_{\ell+k}w_{\ell+k+1}...w_{\ell+2k}$. Then, because $E_X(\ell) = E_X(\ell+k)$, there exists a word $z$ of length $\ell$ such that $z$ and $w_1w_2...w_{\ell+k}$ have the same extender set. Let $(s,u)$ be in the extender set of $w$. Then $sw_1w_2...w_{\ell+k}w_{\ell+k+1}...w_{\ell+2k}u$ is a point of $X$, so $(s,w_{\ell+k+1}...w_{\ell+2k}u)$ is in the extender set of $w_1w_2...w_{\ell+k}$, and thus in the extender set of $z$. So $szw_{\ell + k + 1}w_{\ell + k + 2}...w_{\ell+2k}u$ is a point of X. Hence $(s, u)$ is in the extender set of $zw_{\ell + k + 1}w_{\ell + k + 2}...w_{\ell+2k}$, a word of length $\ell +k$. Similarly, if $(s,u)$ is in the extender set of $zw_{\ell + k + 1}w_{\ell + k + 2}...w_{\ell+2k}$, then $(s,u)$ is in the extender set of $w$, giving $E_X(w) = E_X(zw_{\ell + k + 1}w_{\ell + k + 2}...w_{\ell+2k})$. Therefore we have $E_X(\ell+2k) \subseteq E_X(\ell+k)$ and we may conclude that $E_X(\ell+k) = E_X(\ell+2k)$. \newline \indent
Now,  $\{|E_X(\ell)|\} < p$ for any given $\ell \in \mathbb{N}$. Moreover, we have proven that the sequence $\{E_X(\ell + jk)\}_{j \in \mathbb{N}}$ is nondecreasing and nested by inclusion, and once two terms of the sequence are equal, it will stabilize for all larger $j$. The sequence $\{E_X(\ell + jk)\}_{j \in \mathbb{N}}$ must grow fewer than $p$ times, so the periodicity of the sequence $\{E_X(\ell)\}$ (and thus of $\{|E_X(\ell)|\}$) must begin before the $p + pk^{th}$ term, and $p + pk \leq p + p(p!) = p(1 + p!)$. \newline
\indent Again, a similar argument using the follower set portion of Lemma ~\ref{pumping} establishes the corresponding result for follower sets.
\end{proof}

\section{Existence of Eventually Periodic Nonconstant Follower and Extender Set Sequences}
\label{followers}

Now we demonstrate the existence of sofic shifts with follower set sequences which are not eventually constant. Given $n \in \mathbb{N}$ and $S \subset \{0, 1, ..., n-1\}$, construct an irreducible graph $\mathcal{G}_{n,S}$ in the following way: \newline

First, place edges labeled $p, q$ and $b$ as below, followed by a loop of $n$-many edges labeled $a$. We will refer to the initial vertex of the edge $p$ as ``Start." \newline

\includegraphics[scale=.7]{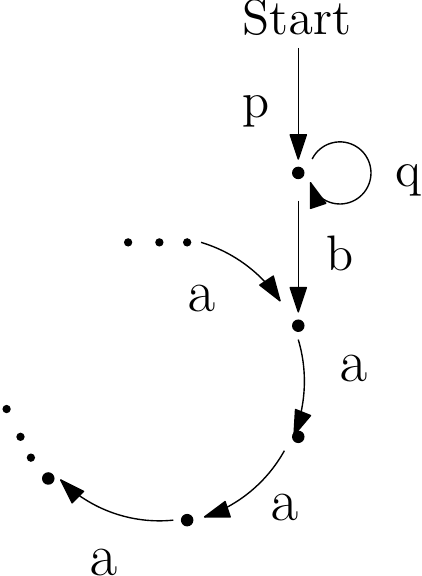}

Choose a fixed $i^* \in S$. Then for every $i \in S, \> i \neq i^*$, place two consecutive edges $c_i$ and $d_i$, such that the intial vertex of $c_i$ is the $(i-2 \pmod n)^{th}$ vertex of the loop of edges labeled $a$ (where we make the convention that the terminal vertex of the edge $b$ is the $0^{th}$ vertex of the loop, the next vertex the $1^{st}$ vertex of the loop, and so on) and the terminal vertex of $d_i$ is ``Start." For $i^*$, we still add the edge $c_{i^*}$, but follow it instead by another edge labeled $b$ and another loop of $n$-many edges labeled $a$: \newline \newline

\includegraphics[scale=.7]{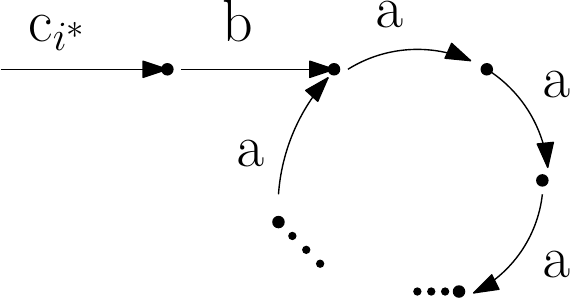}

Again, $\forall \> i \in S$, at the $(i - 2 \pmod n)^{th}$ vertex of the new loop, add an edge $c_i$, and if $i \neq i^*$, follow with an edge $e_i$ returning to Start. After $c_{i^*}$, add a third loop of $n$-many edges labeled $a$: \newline

\includegraphics[scale=.7]{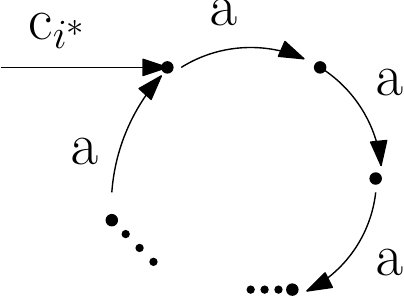}

Finally, for each $i \in S$, (including $i^*$), add an edge $c_i$ from the $(i - 2 \pmod n)^{th}$ vertex of the third loop returning to Start. The resulting graph is $\mathcal{G}_{n,S}$. Due to choice of $i^*$, there are $|S|$-many possible graphs $\mathcal{G}_{n,S}$; the results of this paper will hold for any of them.

\begin{figure}[h]
\includegraphics[scale=0.7]{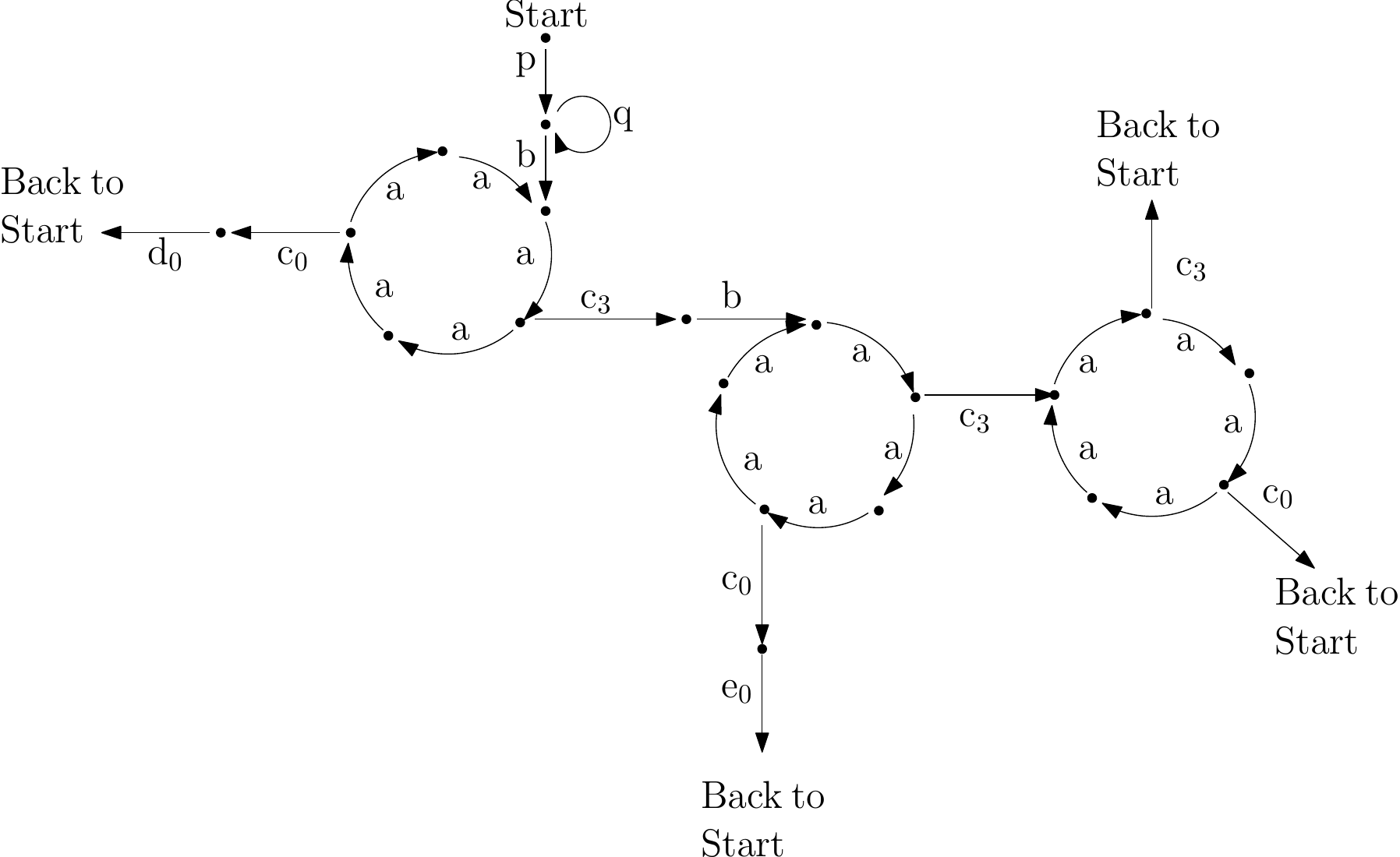}
\caption{$\mathcal{G}_{5, \{0,3\}}$ with $i^* = 3$ (where we use the convention that edges terminating at ``Back to Start" return to the initial vertex of the edge $p$.)}\label{G503}
\end{figure}

We first make some basic observations about the graphs $\mathcal{G}_{n,S}$. It is easy to check that for any $n, S$, the graph $\mathcal{G}_{n,S}$ will be irreducible, right-resolving and left-resolving, follower-separated and extender-separated. We furthermore observe that the graph is primitive:

\begin{lemma}\label{primitivity}
For any $n \in \mathbb{N}, S \subset \{0, 1, ..., n-1\}$, the graph $\mathcal{G}_{n,S}$ is primitive, and the primitivity distance of $\mathcal{G}_{n,S}$ is at most $3n + 3$.
\end{lemma}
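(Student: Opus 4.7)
The plan is to prove primitivity by locating cycles through a common base vertex whose lengths have greatest common divisor $1$, then combining them with short reachability estimates to produce paths of every sufficiently large length between every ordered pair of vertices. Since the construction explicitly makes $\mathcal{G}_{n,S}$ strongly connected (it is noted to be irreducible just before the lemma), the first requirement is in hand. A direct count shows the graph has on the order of $3n+4$ vertices laid out along the $p, q, b$ chain, the three $a$-loops of length $n$, and the connecting vertex between the first $c_{i^*}$ and the second $b$; thus there is a simple path of length at most $3n+3$ between every pair of vertices.

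Next, I would enumerate cycles through the vertex ``Start'' according to where they exit the main structure. A Type A cycle exits the first loop via $c_i, d_i$ (for $i \in S \setminus \{i^*\}$) and has base length $5 + (i-2 \bmod n)$; a Type B cycle exits the second loop via $c_i, e_i$ (for $i \in S \setminus \{i^*\}$) and has base length $7 + (i^* - 2 \bmod n) + (i - 2 \bmod n)$; a Type C cycle exits the third loop via $c_i$ (for $i \in S$) and has base length $7 + 2(i^* - 2 \bmod n) + (i - 2 \bmod n)$. Extra traversals of any loop contribute multiples of $n$, and each $a$-loop itself is a length-$n$ cycle in its own right. I would then show that the collection of these cycle lengths has greatest common divisor $1$: two distinct exit indices yield cycle lengths whose difference is at most $n-1$, and combining such a difference with the cycle of length $n$ drives the overall $\gcd$ down to $1$.

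Given $\gcd = 1$, a Sylvester--Frobenius / Chicken McNugget argument shows every sufficiently large integer is a nonnegative integer combination of the cycle lengths. For any ordered pair $(I, J)$ and any $N \geq 3n+3$, I would then construct a path of length exactly $N$ by concatenating a shortest path from $I$ to Start, an appropriate combination of Start-cycles whose total length is $N - d(I, \mathrm{Start}) - d(\mathrm{Start}, J)$, and a shortest path from Start to $J$, with careful bookkeeping to confirm that $3n+3$ already suffices as the cutoff. The hardest part of the argument is the $\gcd$ computation together with the tightness of the Frobenius bound: I expect I would need a small amount of case analysis on $S$ and on $(i^*-2) \bmod n$ to verify that the attainable cycle lengths indeed span a Frobenius region starting below $3n+3$ rather than merely somewhere between $3n+3$ and a larger multiple of $n$.
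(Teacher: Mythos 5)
There is a genuine gap, and it stems from the one feature of $\mathcal{G}_{n,S}$ that your cycle enumeration omits: the self-loop labeled $q$. Your Types A, B, C all pass through the vertex anchoring $q$ but never use the loop $q$ itself, and without it your $\gcd$ claim is false. Concretely, take $n=4$, $S=\{0,2\}$, $i^*=0$: the exit vertices of the loops are at positions $(0-2)\bmod 4 = 2$ and $(2-2)\bmod 4 = 0$, and a direct count shows every Type A, B, or C cycle has even length ($4+4k$, $8+4k$, $10+4k$, $12+4k$), as do the $a$-loops (length $4$). So the $\gcd$ of all the cycle lengths you list is $2$, your claim that ``two distinct exit indices yield cycle lengths whose difference is at most $n-1$'' which ``drives the overall $\gcd$ down to $1$'' does not hold (a difference of $2$ against a loop of length $4$ gives $\gcd 2$), and primitivity does not follow from your enumeration. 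The case $|S|=1$ is worse still: there are then no Type A or B cycles at all, and for, say, $n=3$, $S=\{2\}$ every enumerated cycle length is divisible by $3$. A second, independent problem is quantitative: even where the $\gcd$ is $1$, the Sylvester--Frobenius threshold for generators of size roughly $n$ is generically quadratic in $n$, so the ``careful bookkeeping'' you defer would not land you at $3n+3$.

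Both problems disappear once you use the self-loop $q$, which is the entire point of its presence in the construction and is how the paper argues. The loop $q$ is a cycle of length $1$ at its anchor vertex, so the $\gcd$ of cycle lengths there is trivially $1$; better, any path that passes through that vertex can be padded to \emph{every} greater length simply by inserting repetitions of $q$, so no Frobenius argument is needed at all. The only thing left to check is that for every ordered pair $(I,J)$ there is a path from $I$ to $J$ of length at most $3n+3$ that passes through the anchor of $q$: such a path traverses each $a$-loop at most once, using at most $n-1$ of the $a$-edges in each, plus at most six connecting edges, giving $3(n-1)+6 = 3n+3$. I would encourage you to restructure your proof around this observation rather than trying to repair the $\gcd$ computation.
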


\begin{proof}
Any irreducible graph with a self-loop is primitive, so due to the self-loop labeled $q$, $\mathcal{G}_{n,S}$ is primitive. So long as a path passes the vertex at which $q$ is anchored, that path may be inflated to any greater length by following the self-loop $q$ repeatedly. Given any two vertices $I$ and $J$ in $\mathcal{G}_{n,S}$, we may clearly get from $I$ to $J$, while being certain to also pass the vertex anchoring $q$, by traveling through each loop of edges labeled $a$ at most once (and following at most $n-1$ of the edges in each loop), and using no more than six letters total for the connecting paths between the loops. Thus, the longest path required to travel from one vertex to another in $\mathcal{G}_{n,S}$, requiring that such a path pass through the vertex which anchors $q$, is of length at most $3(n-1) + 6 = 3n+3$. For instance, if $S = \{1\}$, the shortest possible path from the initial vertex of $p$ to itself is labeled by the word $pba^{n-1}c_1ba^{n-1}c_1a^{n-1}c_1$, which clearly travels through the vertex anchoring $q$. This is a sort of ``worst-case" scenario, where, because $|S| = 1$, the shortest path requires traveling through all three loops of the graph, and with $i^*=1$, the exit of each loop is as far away from the entry point as possible.
\end{proof}

Any graph $\mathcal{G}_{n,S}$ created by the above construction will yield a shift with a follower set sequence and extender set sequence of period $n$ with a difference in $\limsup$ and $\liminf$ of 1:

\begin{theorem}\label{Lemma1} 
Let $n \in \mathbb{N}$ and $S \subset \{0, 1, ... , n-1\}$. Then the shift $X_{\mathcal{G}_{n, S}}$ has $3n + 3|S| + 4$ follower sets for words of length $\ell$ where $\ell \geq n + 2$ and $\ell \pmod n \in S$, and only $3n + 3|S| + 3$ follower sets for words of length $\ell'$ where $\ell' \geq n + 2$ and $\ell' \pmod n \notin S$. Furthermore, the shift $X_{\mathcal{G}_{n, S}}$ has $(3n + 2|S| + 1)^2 + |S| + 3$ extender sets for words of length $\ell$ where $\ell \geq 3n+3$ and $\ell \pmod n \in S$, and only $(3n + 2|S| + 1)^2 + |S| + 2$ extender sets for words of length $\ell'$ where $\ell' \geq 3n+3$ and $\ell' \pmod n \notin S$.
\end{theorem}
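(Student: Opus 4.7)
I plan to reduce both counts to enumerating distinct \emph{terminal vertex sets} $T(w) = \{v \in V(\mathcal{G}_{n,S}) : \text{some path labeled } w \text{ ends at } v\}$ (for follower sets) and \emph{initial-terminal pair sets} $P(w) \subseteq V \times V$ (for extender sets) arising from $w \in L_\ell(X_{\mathcal{G}_{n,S}})$. Since $\mathcal{G}_{n,S}$ is right-resolving, $F_X(w) = \bigcup_{v \in T(w)} F_X(v)$; follower-separation combined with a direct check that the relevant vertices have incomparable follower sets (for instance, $T_1^i$, $T_2^i$, and $\mathrm{Start}$ have sole outgoing letters $d_i$, $e_i$, $p$ respectively, so their follower sets begin with distinct letters) will guarantee that distinct $T(w)$ yield distinct follower sets. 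An analogous reduction using left-resolving and extender-separation handles $P(w)$. The vertex set is $\mathrm{Start}$, the self-loop vertex $Q$, the $3n$ loop vertices $L_k^j$ ($k \in \{1,2,3\}$, $0 \le j < n$), the transition vertices $T_1^i, T_2^i$ for $i \in S \setminus \{i^*\}$, and $B_1$ (terminal of $c_{i^*}$ from Loop 1), giving $3n + 2|S| + 1$ in total.

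I then enumerate the classes that arise. Each singleton $\{v\}$ is realized as $T(w)$ for some $w$ of length $\ell \geq n+2$ by an explicit construction: $\{L_1^j\}$ via $w = pq^{\ell-j-2}ba^j$; $\{Q\}$ via $w = q^\ell$; $\{\mathrm{Start}\}$ via $w = \cdots d_i$ when $|S| \geq 2$, or via $w = a^k c_{i^*} a^r c_{i^*}$ (forcing the path through Loops 2 and 3) when $|S| = 1$; and analogously for the other vertices using $q$-padding or loop traversals. The non-singleton classes are $T(a^\ell) = \{L_k^j : k \in \{1,2,3\},\, 0 \le j < n\}$, the doubleton $T(ba^{\ell-1}) = \{L_1^{(\ell-1) \bmod n}, L_2^{(\ell-1) \bmod n}\}$, and for each $i \in S$ the tripleton $T(a^{\ell-1}c_i) = \{T_1^i, T_2^i, \mathrm{Start}\}$ (with $T_1^{i^*}, T_2^{i^*}$ replaced by $B_1, L_3^0$ when $i = i^*$), contributing $|S|+2$ non-singleton classes and totaling $3n + 3|S| + 3$. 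When $\ell \bmod n \in S$, the additional word $w = ba^{\ell-2}c_{\ell \bmod n}$ lies in the language (precisely because $\ell - 2 \equiv (\ell \bmod n) - 2 \pmod{n}$ exactly in this case) and yields a new doubleton $\{T_1^{\ell \bmod n}, T_2^{\ell \bmod n}\}$ (or $\{B_1, L_3^0\}$ if $\ell \bmod n = i^*$), raising the total to $3n + 3|S| + 4$.

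To establish completeness of this enumeration, I will case-analyze on which letters appear in $w$: if $w$ contains any of the unique-edge letters $p, q, d_i, e_i$, then $|I(w)| = 1$ and $T(w)$ is forced to be a singleton; otherwise $w$ uses only letters from $\{a, b, c_i\}$, and tracing paths on the three loops together with the transitions $c_{i^*}b$ (Loop 1 to Loop 2) and $c_{i^*}$ (Loop 2 to Loop 3) shows that multi-vertex $I(w)$ arises only from the four word-shapes enumerated above---the crucial observation being that after any $c_i$ or $b$, the surviving branches end at vertices with pairwise disjoint outgoing-letter sets, so any subsequent non-$a$ letter forces a single branch. For the extender set count, an analogous enumeration of $P(w)$ applies; by Lemma~\ref{primitivity} we have primitivity distance at most $3n + 3$, so for $\ell \geq 3n + 3$ every pair $(I, I') \in V \times V$ is realized as $P(w) = \{(I, I')\}$ by building an $I$-to-$I'$ path of length $\ell$ that uses the distinguishing $q$ self-loop (whose label pins down the path uniquely), giving $(3n + 2|S| + 1)^2$ singleton pair classes, and the non-singleton $P(w)$'s mirror the non-singleton $T(w)$'s to contribute an additional $|S| + 2$ (or $|S| + 3$ when $\ell \bmod n \in S$) classes. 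The main obstacle will be the exhaustive structural case analysis confirming that no further non-singleton classes arise, together with verifying that all singleton $T(w)$ and $P(w)$ are realized throughout the claimed ranges of $\ell$---the trickiest subcase being when $|S|$ is small and $\ell$ is close to the threshold $n+2$, where the padding constructions require especially careful choice.
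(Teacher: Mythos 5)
Your proposal is correct and follows essentially the same route as the paper: reduce follower/extender sets to terminal-vertex sets and initial–terminal pair sets, count all $3n+2|S|+1$ singletons (resp.\ $(3n+2|S|+1)^2$ singleton pairs) via synchronizing words together with irreducibility/primitivity, and then observe that the only non-synchronized words are of the shapes $a^\ell$, $ka^{\ell-1}$, $a^{\ell-1}k'$, $ka^{\ell-2}k'$, contributing the extra $|S|+2$ classes plus one more exactly when $\ell \bmod n \in S$. Your explicit realizations of the singletons and the incomparability check for distinguishing the non-singleton unions are slightly more detailed than the paper's, but the decomposition and the counts are identical.
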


\begin{proof}
Let $\ell \geq n+2$. Let $\mathcal{G} = \mathcal{G}_{n, S}$ as defined above. We will find the number of follower sets and extender sets of words of length $\ell$ in $X_\mathcal{G}$ (though for one part of the extender set case, we will need to require $\ell \geq 3n+3$). Each follower set $F_X(w)$ is uniquely determined by the set of terminal vertices of paths labeled $w$ in $\mathcal{G}$, and each extender set $E_X(w)$ is determined by the set of pairs $\{I \rightarrow T \}$ of initial and terminal vertices of paths labeled $w$ in $\mathcal{G}$. \newline
\indent Since in $\mathcal{G}$ the words $p$, $q$, $c_{i^*}b$, $d_i$, $e_i$, and $c_{i^*}a$ are right-synchronizing, the longest path required to get from a right-synchronizing word to any vertex of $\mathcal{G}$ is $n + 2$. Because $\ell \geq n+2$, and the graph $\mathcal{G}$ is irreducible and right-resolving, every singleton represents the follower set of some word $w$ of length $\ell$. There are $3n + 2|S| + 1$ such follower sets, all distinct as $\mathcal{G}$ is follower-separated. \newline
\indent It is easy to see that the right-synchronizing words listed above are in fact bi-synchronizing. Since the graph $\mathcal{G}$ is both left- and right-resolving, if a legal word $w$ contains any bi-synchronizing word, then only one pair $\{ I \rightarrow T \}$ can be the initial and terminal vertices of paths labeled $w$. If $\ell \geq 3n+3$, by Lemma ~\ref{primitivity} every single pair $\{I \rightarrow T \}$ corresponds to the extender set of some word $w$ of length $\ell$. There are $(3n + 2|S| + 1)^2$ such extender sets, all distinct as $\mathcal{G}$ is extender-separated.
\indent Note that for the graph $\mathcal{G}$, recording the labels of two edges beyond any loop of edges labeled $a$, whether before or after the loop, results in a bi-synchronizing word. Since any word which would be capable of having a follower set not corresponding to a singleton (or of having an extender set not corresponding to a single pair of initial and terminal vertices) must be one which avoids all bi-synchronizing words, and $\ell \geq n+2>2$, any word of length $\ell$ with such a follower or extender set must include a string of $a$'s, and no more than 1 letter on either side of such a string. So only words of the forms $a^\ell$, $ka^{\ell-1}$, $a^{\ell-1}k'$, and $ka^{\ell-2}k'$ (where $k$ and $k'$ are labels appearing in $\mathcal{G}$ not equal to $a$) can terminate (or begin) at more than one vertex. \newline
\indent The word $a^\ell$ has 1 follower set, corresponding to all $3n$ vertices involved in loops of edges labeled $a$. This follower set is distinct from those corresponding to singletons, for which we have previously accounted. Similarly, the extender set of the word $a^\ell$ is distinct from those for which we have previously accounted.\newline
\indent The label $a$ is only followed in $\mathcal{G}$ by the labels $a$ and $c_i$ for all $i \in S$. For each $c_i$, the word $a^{\ell-1}c_i$ has a unique follower set corresponding to three terminal vertices, one for each loop in which the $a^{\ell-1}$ may occur. Thus there are $|S|$-many follower sets of this form, all distinct from previous follower sets. Similarly, there are $|S|$-many extender sets of this form, all distinct from previous extender sets as well.\newline
\indent The label $a$ is only preceded in $\mathcal{G}$ by the labels $a$, $b$, and $c_{i^*}$. Since $c_{i^*}a$ is bi-synchronizing, the follower set for the word $c_{i^*}a^{\ell-1}$ corresponds to a singleton and has already been counted. The word $ba^{\ell-1}$ has a follower set corresponding to two terminal vertices, one in each loop of edges labeled $a$ which is preceded by $b$. Hence there is 1 additional distinct follower set of this form, and again this behavior is mirrored by the extender sets--there is 1 additional distinct extender set for the word $ba^{\ell -1}$. \newline
\indent Finally, based on our above observations, if a word of the form $ka^{\ell-2}k'$ is to have a follower set corresponding to a greater number of vertices than one, that word must be of the form $ba^{\ell-2}c_i$. By construction, a path with this label only exists in $\mathcal{G}$ if $\ell \pmod n \in S$. If such a path exists, it contributes a single new follower set corresponding to two terminal vertices, each one edge past a loop of edges labeled $a$ that is preceded by the label $b$. This follower set cannot repeat one that we already found: if $i \neq i^*$, then the follower set of $ba^{\ell -2}c_i$ is exactly the set of all legal sequences beginning with $d_i$ or $e_i$, clearly not equal to the follower set of any other word of length $\ell$. If $i = i^*$, the follower set of $ba^{\ell -2}c_i$ contains sequences beginning with each of the letters $a$ and $b$, but no other letter, again setting it apart from any other follower set previously discussed. Similarly, the word $ba^{\ell -2}c_i$ contributes a single new extender set for any length $\ell$ for which a path with this label exists. \newline
\indent Therefore, in $X_\mathcal{G}$, if $\ell \geq n+2$ and $\ell \pmod n \in S$, there are $3n + 2|S| + 1 + 1 + |S| + 1 + 1 = 3n + 3|S| + 4$ follower sets of words of length $\ell$, while if $\ell' \geq n+2$ and $\ell' \pmod n \notin S$, there are only $3n + 3|S| + 3$ follower sets of words of length $\ell'$. Moreover, if $\ell \geq 3n+3$ and $\ell \pmod n \in S$, there are $(3n + 2|S| + 1)^2 + 1 + |S| + 1 + 1 = (3n + 2|S| + 1)^2 + |S| + 3$ extender sets of words of length $\ell$, while if $\ell' \geq 3n+3$ and $\ell' \pmod n \notin S$, there are only $(3n + 2|S| + 1)^2 + |S| + 2$ extender sets of words of length $\ell'$. \newline
\end{proof}

So, for the shift presented by the graph in Figure~\ref{G503}, for $\ell \geq 7$, the follower set sequence oscillates between $|F_X(\ell)| = 3(5) + 3(2) + 4 = 25$ if $\ell \equiv 0$ or $3 \pmod 5$, and $|F_X(\ell)| = 24$ if $\ell \equiv 1$, $2$, or $4 \pmod 5$. Moreover, for $\ell \geq 18$, the extender set sequence oscillates between $|E_X(\ell)| = (3(5) + 2(2) + 1)^2 + (2) + 3 = 405$ if $\ell \equiv 0$ or $3 \pmod 5$, and $|E_X(\ell)| = 404$ if $\ell \equiv 1$, $2$, or $4 \pmod 5$. \newline

We have now demonstrated the existence of sofic shifts whose follower set sequences and extender set sequences eventually oscillate between two different (but adjacent) values. We may furthermore combine these graphs, forming new graphs presenting shifts whose follower set sequences and extender set sequences oscillate by more than 1:

\begin{theorem}\label{Lemma2}
Let $\mathcal{G}_1$ and $\mathcal{G}_2$ be two finite, irreducible, right-resolving, left-resolving, primitive, extender-separated labeled graphs with disjoint label sets, each containing a self-loop labeled by a bi-synchronizing letter $q_1$ and $q_2$ respectively. Let $I_1$ be the anchoring vertex of $q_1$ in $\mathcal{G}_1$ and $I_2$ be the anchoring vertex of $q_2$ in $\mathcal{G}_2$. Let $x,y$ be letters not in the label set of $\mathcal{G}_1$ or $\mathcal{G}_2$. Construct a new graph $\mathcal{G}$ by taking the disjoint union of $\mathcal{G}_1$ and $\mathcal{G}_2$ and adding an edge labeled $x$ beginning at $I_1$ and terminating at $I_2$ and an edge labeled $y$ beginning at $I_2$ and terminating at $I_1$. Then $\mathcal{G}$ is finite, irreducible, right-resolving, left-resolving, primitive, extender-separated, contains a self-loop labeled with a bi-synchronizing letter, and for any $\ell \in \mathbb{N}$, $$|F_{X_\mathcal{G}}(\ell)| =  |F_{X_{\mathcal{G}_1}}(\ell)| + |F_{X_{\mathcal{G}_2}}(\ell)|.$$ Moreover, for any $\ell$ greater than twice the maximum of the  primitivity distances of $\mathcal{G}_1$ and $\mathcal{G}_2$, $$|E_{X_\mathcal{G}}(\ell)| = |E_{X_{\mathcal{G}_1}}(\ell)| + |E_{X_{\mathcal{G}_2}}(\ell)| + 2|V(\mathcal{G}_1)|\cdot |V(\mathcal{G}_2)|.$$ \newline
\end{theorem}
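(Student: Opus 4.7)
My plan is to exploit the disjointness of label sets: because $x$ and $y$ carry labels occurring in neither $\mathcal{G}_i$, every length-$\ell$ word in $L(X_\mathcal{G})$ falls into exactly one of three classes---those in $L_\ell(\mathcal{G}_1)$, those in $L_\ell(\mathcal{G}_2)$, or ``crossing'' words containing at least one occurrence of $x$ or $y$---and any crossing word has a uniquely determined initial and terminal vertex in $\mathcal{G}$, since $x$ goes only $I_1 \to I_2$ and $y$ only $I_2 \to I_1$, the $\mathcal{G}_i$-segments between crossings are pinned to $I_1$ or $I_2$ at each crossing boundary, and these pins together with left/right-resolving determine the endpoints. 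The structural claims are routine: $x$ and $y$ carry new labels so right-resolving and left-resolving are preserved at $I_1, I_2$; $q_1$ remains a bi-synchronizing self-loop in $\mathcal{G}$ since no new edge carries its label; irreducibility follows from concatenating $\mathcal{G}_i$-paths through the bridging edges; and primitivity follows from irreducibility plus the surviving self-loop.

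For the follower set identity, I would show that for $w_1, w_2 \in L_\ell(\mathcal{G}_i)$, $F_{X_\mathcal{G}}(w_1) = F_{X_\mathcal{G}}(w_2)$ iff $F_{X_{\mathcal{G}_i}}(w_1) = F_{X_{\mathcal{G}_i}}(w_2)$. The forward direction restricts follower sets to sequences using only $\mathcal{G}_i$-letters. The backward direction uses the bi-synchronizing property of $q_i$: any continuation that leaves component $i$ must first reach $I_i$ via a finite $\mathcal{G}_i$-path, and whether a label $v$ reaches $I_i$ from a terminal set is detected by whether $vq_i^\infty$ lies in $F_{X_{\mathcal{G}_i}}(w)$, which is determined by the $\mathcal{G}_i$-follower set. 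Follower sets from $L_\ell(\mathcal{G}_1)$ and $L_\ell(\mathcal{G}_2)$ are disjoint because their admissible first-letter continuations live in disjoint alphabets. For a crossing word $w = w' c v$ with $c$ the last occurrence of $x$ or $y$ in $w$ and $v$ the final $\mathcal{G}_i$-segment of length $k \leq \ell - 1$, the $\mathcal{G}_i$-only word $q_i^{\ell - k} v$ has length $\ell$ and the same unique terminal vertex as $w$, so it contributes no new follower set. Combining these yields $|F_{X_\mathcal{G}}(\ell)| = |F_{X_{\mathcal{G}_1}}(\ell)| + |F_{X_{\mathcal{G}_2}}(\ell)|$.

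The extender set identity runs in parallel. The same reduction, combined with extender-separatedness of $\mathcal{G}_i$, yields $|E_{X_{\mathcal{G}_i}}(\ell)|$ distinct extender sets from $L_\ell(\mathcal{G}_i)$-words in $\mathcal{G}$. A crossing word of length $\ell$ has a unique pair $(T_1, T_2)$. When $(T_1, T_2) \in V(\mathcal{G}_i)^2$ and $\ell > 2\max(N_1, N_2)$, primitivity supplies $\mathcal{G}_i$-path-labels $\alpha$ from $T_1$ to $I_i$ and $\beta$ from $I_i$ to $T_2$ so that $\alpha q_i^k \beta$, with $k = \ell - |\alpha| - |\beta| \geq 1$, lies in $L_\ell(\mathcal{G}_i)$ and has path-pair set $\{(T_1, T_2)\}$ (the $q_i^k$ pins the walk at $I_i$, then left/right-resolving pin $\alpha$ and $\beta$), so its extender set coincides with the crossing word's. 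When $(T_1, T_2) \in V(\mathcal{G}_i) \times V(\mathcal{G}_j)$ with $i \neq j$, the extender set contains extenders $(s, u)$ whose $u$-walk starts in $V(\mathcal{G}_j)$, which no $\mathcal{G}_i$- or $\mathcal{G}_j$-only extender set admits, so these $2|V(\mathcal{G}_1)||V(\mathcal{G}_2)|$ cross pairs yield genuinely new extender sets. Primitivity again realizes every such cross pair via a word of the form $\alpha x \beta$ or $\alpha y \beta$.

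The principal obstacle is verifying extender-separatedness of $\mathcal{G}$, which is implicitly needed both for the reduction (distinctness of $\mathcal{G}_i$-extender sets of pairs lifting to $\mathcal{G}$) and for distinctness among the cross-singleton extender sets. For any two distinct pairs in $\mathcal{G}$, the approach is to build a witness $(s, u)$ by appending $q$-tails on both sides: the tail forces the walk labeled $s$ to terminate uniquely at the intended $T_1$ (bi-synchronization pins the walk at the anchor, and left-resolving then traces back along the remaining prefix of $s$ to $T_1$), and symmetrically for $u$ and $T_2$. Because any second pair differs in at least one coordinate, the pinned $(s, u)$ cannot simultaneously extend a path labeled by the second pair. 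Organizing this case analysis uniformly across sides, and checking that the pinning survives in $\mathcal{G}$---where extensions may now use bridging letters but the $q_i$-tails still force uniqueness---is the subtlest part of the argument.
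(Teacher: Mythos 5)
Your proposal is correct and follows essentially the same route as the paper's proof: classify length-$\ell$ words by whether they contain $x$ or $y$, use the bi-synchronizing self-loops $q_i$ to show that the construction neither collapses nor splits follower/extender sets of words living in a single component, replace each crossing word by a single-component word with the same terminal vertex (or vertex pair, via primitivity, when both endpoints lie in one $\mathcal{G}_i$), and count the $2|V(\mathcal{G}_1)|\cdot|V(\mathcal{G}_2)|$ genuinely new extender sets coming from cross-component vertex pairs. Your treatment is in places more explicit than the paper's (e.g., the $q_i^{\ell-k}v$ replacement word and the verification of extender-separatedness of $\mathcal{G}$, which the paper leaves to the reader), but the decomposition and key ideas are the same.
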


\begin{figure}[h]
\includegraphics[scale=.8]{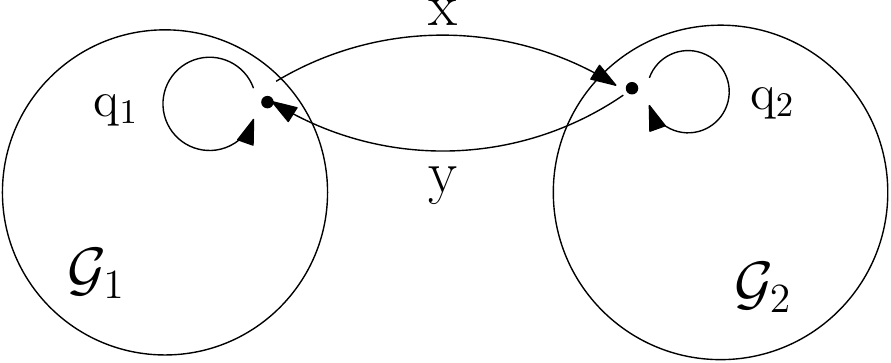}
\caption{The graph $\mathcal{G}$ constructed as in Theorem ~\ref{Lemma2}}
\end{figure}

\begin{proof}

The reader may check that $\mathcal{G}$ is finite, irreducible, right-resolving, left-resolving, primitive, extender-separated and contains a self-loop labeled with a bi-synchronizing letter. We first check that this construction does not cause any collapsing of follower or extender sets. That is, if two words $w$ and $v$ had distinct follower or extender sets in $X_{\mathcal{G}_1}\sqcup X_{\mathcal{G}_2}$, then they have distinct follower or extender sets in $X_{\mathcal{G}}$. We present the argument for follower sets; the argument for extender sets is similar. \newline
\indent If for two words $w$ and $v$ in $L_\ell(X_{\mathcal{G}_1})\sqcup L_\ell(X_{\mathcal{G}_2})$, we have $F(w) \neq F(v)$, then there exists a sequence $s$ in $X_{\mathcal{G}_1}$ or $X_{\mathcal{G}_2}$ which may follow one word but not the other--without loss of generality, say that $s$ may follow $w$ but not $v$. Such a sequence may still follow $w$ in $\mathcal{G}$ by following the same path labeled $s$ which followed $w$ in $\mathcal{G}_1$ or $\mathcal{G}_2$ to begin with. The sequence $s$ still may not follow $v$, as no path existed in the parent graph $\mathcal{G}_1$ or $\mathcal{G}_2$ labeled $s$ following $v$, and any new paths introduced by our construction may not be labeled $s$, as $s$ did not contain the letters $x$ or $y$. Hence $|F_{X_{\mathcal{G}_1}}(\ell)| + |F_{X_{\mathcal{G}_2}}(\ell)| \leq |F_{X_{\mathcal{G}}}(\ell)|$, and similarly, $|E_{X_{\mathcal{G}_1}}(\ell)| + |E_{X_{\mathcal{G}_2}}(\ell)| \leq |E_{X_{\mathcal{G}}}(\ell)|$. \newline
\indent Now we establish that no extra follower sets are introduced by this construction. If two words $w$ and $v$ had the same follower set in $X_{\mathcal{G}_1}\sqcup X_{\mathcal{G}_2}$, then they certainly exist in the same parent graph, $\mathcal{G}_1$ or $\mathcal{G}_2$; without loss of generality, say $\mathcal{G}_1$. Let $s$ be some sequence following $w$ in $\mathcal{G}$. If the path in $\mathcal{G}$ labeled $s$ is contained within $\mathcal{G}_1$, then $s$ is part of the follower set of $w$ in $X_{\mathcal{G}_1}$ and thus, is part of the follower set of $v$ in $X_{\mathcal{G}_1}$. So $s$ may follow $v$ in $\mathcal{G}$. On the other hand, if $s$ is presented by a path traveling through both graphs, then $s$ contains the letter $x$. Let $z$ denote the maximal finite prefix of $s$ without the letter $x$. A path labeled $z$ follows $w$ in $\mathcal{G}_1$, and since $z$ must terminate at $I_1$ in order to be followed by $x$, a path labeled $zq_1$ follows $w$ in $\mathcal{G}_1$ as well. Then a path labeled $zq_1$ must also follow $v$ in $\mathcal{G}_1$, and since $q_1$ is left-synchronizing, there must exist a path labeled $z$ following $v$ in $\mathcal{G}_1$ terminating at $I_1$. Such a path may certainly, then, be followed by $x$, and indeed, the remaining portion of $s$, so $s$ is in the follower set of $v$. Thus in $X_\mathcal{G}$, the follower sets of $w$ and $v$ remain the same. \newline
\indent Moreover, if a word $v$ is not in either $L_\ell(X_{\mathcal{G}_1})$ or $L_\ell(X_{\mathcal{G}_2})$, then $v$ includes either the letter $x$ or $y$. Since $x$ and $y$ are right-synchronizing, a path labeled $v$ may terminate only at a single vertex. Due to the fact that $\mathcal{G}_1$ and $\mathcal{G}_2$ are each irreducible, right-resolving, and contain a right-synchronizing letter, there exists a word $w$ in either $L_\ell(\mathcal{G}_1)$ or $L_\ell(\mathcal{G}_2)$ which terminates at the same unique vertex as paths labeled $v$. Since the right-synchronizing letter terminates at the same vertex as $x$ or $y$, depending on the graph, we may construct $w$ to be of the desired length $\ell$ in the following way: Let $v = v_1v_2...v_\ell$ and let $v_i$ be the last occurrence of $x$ or $y$ in $v$. Then set $w_{i+1}w_{i+2}...w_\ell = v_{i+1}v_{i+2}...v_\ell$.  Create $w_1...w_i$ by replacing $v_i$ by $q_1$ or $q_2$ (choosing the one which terminates at the same place as $v_i$) and then following any path backward in that same parent graph ($\mathcal{G}_1$ or $\mathcal{G}_2$) to fill in $i-1$ labels before $w_i$. Then $w$ is a right-synchronizing word contained entirely in either $\mathcal{G}_1$ or $\mathcal{G}_2$ of length $\ell$ terminating at the same single vertex as $v$, and so $w$ and $v$ have the same follower set in $X_\mathcal{G}$. Therefore the construction introduced no extra follower sets, and $|F_{X_\mathcal{G}}(\ell)| =  |F_{X_{\mathcal{G}_1}}(\ell)| + |F_{X_{\mathcal{G}_2}}(\ell)|.$ \newline
\indent This construction also causes no splitting of extender sets: If $w$ and $v$ have the same extender set in $X_{\mathcal{G}_1}\sqcup X_{\mathcal{G}_2}$, then they certainly exist in the same parent graph, $\mathcal{G}_1$ or $\mathcal{G}_2$; without loss of generality, say $\mathcal{G}_1$. Let $(s,u)$ be in the extender set of $w$. If $s$ and $u$ are both contained within $\mathcal{G}_1$, then $(s,u)$ is certainly in the extender set of $v$ as well. Otherwise, let $z$ be the maximal suffix of $s$ with no appearance of the letter $y$ and $z'$ be the maximal prefix of $u$ with no appearance of $x$. (Note that in this case one of $z$ and $z'$ may be infinite, but not both.) Then $zwz'$ is contained in $\mathcal{G}_1$ and since $w$ and $v$ have the same extender set in $\mathcal{G}_1$, a path labeled $zvz'$ exists in $\mathcal{G}_1$ as well. If $z = s$ but $z'$ is finite, then as above, $z'$ may be followed by $q_1$ in $\mathcal{G}_1$, which is left-synchronizing, so there must exist a path labeled $z'$ following $sv$ in $\mathcal{G}_1$ terminating at $I_1$. Such a path may certainly, then, be followed by $x$, and the remaining portion of $u$, and so $(s,u)$ is in the extender set of $v$. Similarly, if $z$ is finite but $z' = u$, then $z$ may be preceded by $q_1$, which is right-synchronizing, so there must exist a path labeled $z$ preceding $vu$ in $\mathcal{G}_1$ beginning at $I_1$. Such a path may then be preceded by $y$, and the preceding portion of $s$, and so $(s,u)$ is in the extender set of $v$. If both $z$ and $z'$ are finite, then the path $q_1zvz'q_1$ exists in $\mathcal{G}_1$, and so a path labeled $zvz'$ exists in $\mathcal{G}_1$ beginning and ending at $I_1$ which then may be extended to an infinite path labeled $svu$, so $(s,u)$ is in the extender set of $v$. Thus $E(w) = E(v)$.  \newline
\indent Finally, if a word $v$ is not in either $L_\ell(X_{\mathcal{G}_1})$ or $L_\ell(X_{\mathcal{G}_2})$, then $v$ includes either the letter $x$ or $y$. Since $x$ and $y$ are bi-synchronizing and $\mathcal{G}$ is both left- and right-resolving, paths labeled $v$ have exactly one pair $\{ I \rightarrow T \}$ of initial and terminal vertices. If $I$ and $T$ are in the same parent graph $\mathcal{G}_i$, we observe that if $\ell$ is longer than twice the primitivity distance for $\mathcal{G}_i$, we can construct a path $w$ from $I$ to $I_i$, and a path $u$ from $I_i$ to $T$, such that the path labeled $wq_iu$ has length $\ell$. Because $q_i$ is bi-synchronizing, paths labeled $wq_iu$ have only one pair of initial and terminal vertices; the exact same initial and terminal vertices as $v$, so $E(v) = E(wq_iu)$. On the other hand, if $I$ and $T$ are in different parent graphs, then $E(v)$ is certainly not equal to the extender set of any word in $L_\ell(X_{\mathcal{G}_1})\sqcup L_\ell(X_{\mathcal{G}_2})$, so the construction did introduce new extender sets of words of length $\ell$, but only at most $2|V(\mathcal{G}_1)|\cdot |V(\mathcal{G}_2)|$ many of them. Furthermore, if $\ell$ is longer than the primitivity distance of $\mathcal{G}$, then all $2|V(\mathcal{G}_1)|\cdot |V(\mathcal{G}_2)|$ such extender sets will be realized, and since $\mathcal{G}$ is extender-separated, they will all be distinct. The primitivity distance for $\mathcal{G}$ is at most one greater than the sum of the primitivity distances of $\mathcal{G}_1$ and $\mathcal{G}_2$, so if $\ell$ is greater than twice the maximum of the primitivity distances of $\mathcal{G}_1$ and $\mathcal{G}_2$, we have $|E_{X_\mathcal{G}}(\ell)| = |E_{X_{\mathcal{G}_1}}(\ell)| + |E_{X_{\mathcal{G}_2}}(\ell)| + 2|V(\mathcal{G}_1)|\cdot |V(\mathcal{G}_2)|.$
\end{proof}

It is evident that the process outlined in Theorem~\ref{Lemma2} may be repeated an arbitrary number of times, and since the constant introduced ($0$ in the follower set case, $2|V(\mathcal{G}_1)|\cdot |V(\mathcal{G}_2)|$ in the extender set case) does not depend on $\ell$, we may use this process to increase the oscillations in the follower and extender set sequences of the resulting shift. We formalize this idea in Theorems~\ref{mainthm} and ~\ref{ESmain}, which state that there exist sofic shifts with follower set sequences and extender set sequences of every eventual period and with any natural number as the difference in $\limsup$ and $\liminf$ of the sequence.

\begin{proof}[Proof of Theorem~\ref{mainthm}]
Let $\mathcal{G}_{n,S}$ denote the graph constructed from $n$ and $S \subset \{0,1,..., n-1\}$ as in Theorem~\ref{Lemma1}. First construct $\mathcal{G}_{n,A_2\cup A_3\cup ...\cup A_k}$. By Theorem~\ref{Lemma1}, this graph will give one more follower set to words of length $\ell \geq n+2$ and $\ell \pmod n \in A_2\cup A_3\cup ...\cup A_k$ than to words of length $\ell \geq n+2$ and $\ell \pmod n \in A_1$. \newline
\indent Now, as $\mathcal{G}_{n, A_2\cup A_3 \cup ... \cup A_k}$ is finite, irreducible, right-resolving, left-resolving, primitive, extender-separated, and contains a self-loop labeled with the bi-synchronizing letter $q$, we may use the process defined in Theorem~\ref{Lemma2} to join together $r_2$ many copies of $\mathcal{G}_{n, A_2\cup A_3 \cup ... \cup A_k}$ only by giving each copy a disjoint set of labels. Call the resulting graph $\mathcal{G}_2$. For each $\ell$, the number of follower sets of words of length $\ell$ in $\mathcal{G}_2$ is the sum of the number of follower sets of words of length $\ell$ in each of the $r_2$ copies of $\mathcal{G}_{n, A_2\cup A_3 \cup ... \cup A_k}$. Therefore, the graph $\mathcal{G}_2$ gives $r_2$ more follower sets to words of length $\ell \geq n+2$ with $\ell \pmod n \in A_2\cup A_3\cup ... \cup A_k$ than to words of length $\ell \geq n+2$ with $\ell \pmod n \in A_1$. \newline
\indent Using the same process, we may now join onto $\mathcal{G}_2$ another $(r_3-r_2)$ many copies of the graph $\mathcal{G}_{n, A_3\cup A_4 \cup ... \cup A_k}$, and call the resulting graph $\mathcal{G}_3$. Now,  words of length $\ell \geq n+2$ where $\ell \pmod n \in A_3 \cup A_4 \cup ... \cup A_k$ will have $(r_3 - r_2) + r_2 = r_3$ more follower sets than words of length $\ell \geq n+2$ where $\ell \pmod n \in A_1$, while words of length $\ell \geq n+2$ where $\ell \pmod n \in A_2$ will have only $r_2$ greater follower sets than words of length $\ell \geq n+2$ where $\ell \pmod n \in A_1$. \newline
\indent Continue on this way, adjoining next $(r_4 - r_3)$ copies of $\mathcal{G}_{n, A_4 \cup A_5\cup ... \cup A_k}$ to make $\mathcal{G}_4$, and so forth, terminating after constructing $\mathcal{G}_k$. The graph will clearly be irreducible, and in $\mathcal{G}_k$, for each $1 \leq j \leq k$, words of length $\ell \geq n+2$ where $\ell \pmod n \in A_j$ will have $r_j$ more follower sets than words of length $\ell \geq n+2$ where $\ell \pmod n \in A_1$. That is, if $m$ is defined to be the number of follower sets of words of length $\ell \geq n+2$ where $\ell \pmod n \in A_1$, then words of length $\ell \geq n+2$ where $\ell \pmod n \in A_j$ will have $m + r_j$ many follower sets in $X_{\mathcal{G}_k}$. \newline
\indent Using the formula established in Theorem~\ref{Lemma1}, we see that for each $\mathcal{G}_{n,S}$, words of lengths $\ell \geq n+2$ whose residue classes are not in $S$ have $3n + 3|S| + 3$ many follower sets. Since $A_1 \subseteq S^c$ for every graph used in the construction of $\mathcal{G}_k$, words of length $\ell \geq n+2$ where $\ell \pmod n \in A_1$ must have the following number of follower sets: \newline
\begin{align*}
m &= r_2(3n + 3|A_2\cup A_3\cup ... \cup A_k| + 3) + (r_3 - r_2)(3n + 3|A_3\cup A_4\cup ... \cup A_k| + 3) + ... \\
&\phantom{====================}+ (r_k - r_{k-1})(3n + 3|A_k| + 3) \\
&= r_2(3n + 3\sum_{j=2}^k|A_j| + 3) + (r_3 - r_2)(3n + 3\sum_{j=3}^k|A_j| + 3) + ...\\
&\phantom{====================}+ (r_k - r_{k-1})(3n+ 3\sum_{j = k}^k|A_j| + 3) \\
&= \sum_{i=2}^k(r_i - r_{i-1})(3n + 3\sum_{j = i}^k|A_j| + 3).
\end{align*}
Furthermore, since for all $i \geq 2$, we have $\displaystyle \sum_{j = i}^k|A_j| < n$, we get that \newline
\begin{align*}
m &< \sum_{i=2}^k(r_i - r_{i-1})(6n + 3) \\
&=(6n + 3) \sum_{i=2}^k(r_i - r_{i-1}) \\
&= (6n+3)r_k.
\end{align*}
\end{proof}

This theorem shows that we may construct a sofic shift whose follower set sequence follows any desired oscillation scheme--increasing or decreasing by specified amounts at specified lengths $\ell$, and repeating with any desired eventual period. A similar result holds for extender set sequences, though the bounds for $m = \liminf \{|E_X(\ell)|\}$ and for the start of the periodicity of the sequence are different. 

\begin{proof}[Proof of Theorem~\ref{ESmain}]
We follow the same construction as in the proof of Theorem~\ref{mainthm}, combining $r_k$-many graphs using Theorem~\ref{Lemma2} to construct the graph $\mathcal{G}_k$. Then, in $\mathcal{G}_k$, for each $1 \leq j \leq k$, sufficiently long words of length $\ell$ where $\ell \pmod n \in A_j$ will have $r_j$ more extender sets than sufficiently long words of length $\ell$ where $\ell \pmod n \in A_1$. That is, if $m$ is defined to be the number of extender sets of words of sufficiently long words of length $\ell$ where $\ell \pmod n \in A_1$, then words of sufficient length $\ell$ where $\ell \pmod n \in A_j$ will have $m + r_j$ many extender sets in $X_{\mathcal{G}_k}$ for all $1 \leq j \leq k$.  \newline
\indent To discover what length is sufficient for periodicity of the extender set sequence to begin, we observe that for every graph $\mathcal{G}$ used in the construction of $\mathcal{G}_k$, the primitivity distance of $\mathcal{G}$ is less than or equal to $3n + 3$ as in Lemma ~\ref{primitivity}. Note that, since $n \geq 1$, we have $3n + 3 \leq 7n -1$. (In fact, in any interesting case, $n \geq 2$, so $3n + 3 \leq 5n -1$, but $n = 1$ certainly may be chosen as a trivial case, where $S = \{0\}$ necessarily). By Theorem ~\ref{Lemma2}, the eventual periodicity of the extender set sequence of the combination of two graphs begins before the $2z + 1^{st}$ term, where $z$ is the maximum of the primitivity distances of the two graphs. So, when adding two graphs together in this construction, we get that the eventual periodicity begins before $(3n + 3) + (3n + 3) + 1 \leq (7n -1) + (7n -1) + 1 = 14n -1$. (We observe that the primitivity distance of the resulting graph will also be less than $14n -1$). Since $n$ is the same for each graph involved in the construction, it does not matter which type of graph we are adding at each step, whether a copy of $\mathcal{G}_{n, A_2\cup...\cup A_k}, \mathcal{G}_{n, A_3\cup ...\cup A_k}$, up to $\mathcal{G}_{n, A_k}$. \newline
\indent Though we have performed the same construction as in Theorem ~\ref{mainthm}, we add our graphs in a more efficient order to minimize the effect of the constant $2|V(\mathcal{G}_1)|\cdot |V(\mathcal{G}_2)|$. First consider the case where $r_k$ is a power of 2. Then we may choose to construct $\mathcal{G}_k$ in such a way that at each step, we add two graphs each made up of the same number of components. (2 graphs each consisting of 2 components to make 4, 2 graphs each consisting of 4 components to make 8, and so on). Then if $a_i$ is an upper bound for the start of primitivity at the $i^{th}$ step, an upper bound for the primitivity at the $i+1^{st}$ step is $2(a_i)+1$. Then letting $a_1 = 7n -1$, the value of the sequence $a_i = 2(a_{i-1}) + 1$ at $i = \log_2(r_k) + 1$ will give an upper bound for the start of primitivity for $\mathcal{G}_k$, since we must add together two peices of equal components $\log_2(r_k)$ times to construct $\mathcal{G}_k$. \newline
\indent We claim that for all $i$, $a_i = 2^{i-1}(7n) -1$. This is trivially true for the base case, $i = 1$. By induction, suppose $a_{i-1} = 2^{i-2}(7n) - 1$. Then 
\begin{align*}
a_i &= 2(a_{i-1}) +1 \\
&= 2(2^{i-2}(7n) -1) + 1 \\
&= 2^{i-1}(7n) - 2 + 1 \\
&= 2^{i-1}(7n) -1.
\end{align*}
Thus, when $r_k$ is a power of 2, the primitivity of the extender set sequence of $\mathcal{G}_k$ begins before $a_{\log_2(r_k) + 1} = 2^{\log_2(r_k)}(7n) - 1 = 7nr_k -1$. \newline
\indent Now, the upper bound for the beginning of the periodicity of the extender set sequence certainly increases as $r_k$ increases--increasing $r_k$ means adding more graphs to construct $\mathcal{G}_k$--and so, since $r_k \leq 2^{\lceil \log_2(r_k)\rceil}$ for all $r_k \in \mathbb{N}$, and $2^{\lceil \log_2(r_k)\rceil}$ is a power of 2, for any $r_k$, the primitivity of $\mathcal{G}_k$ must begin at the latest when $\ell = 7(2^{\lceil \log_2(r_k)\rceil})n -1 \leq 7(2^{\log_2(r_k) + 1})n - 1 = 14(r_k)n -1$. \newline
\indent Finally, it remains to show that in this construction, $m \leq 39n^2r_k^2$. We first show that $\mathcal{G}_2$ (that is, $r_2$ combined copies of $\mathcal{G}_{n, A_2 \cup...\cup A_k}$) has at most $6nr_2$ vertices and will have $m \leq 39n^2r_2^2$. The bound on the number of vertices is clear--for any graph $\mathcal{G}_{n, S}$ constructed by the method defined at the beginning of this section, $\mathcal{G}_{n,S}$ has $3n + 2|S| + 1$ vertices, and since $|S| \leq n$ and $n \geq 1$, we have $3n + 2|S| + 1 \leq 6n$. With each graph having at most $6n$ vertices, it is trivial that $\mathcal{G}_2$ has at most $6nr_2$ vertices. As discussed in Theorem ~\ref{Lemma1}, the number of extender sets for $\ell \geq 3n+3$ and $\ell \pmod n \notin S$ (that is, $m$ for $\mathcal{G}_{n,S}$) is $(3n + 2|S| + 1)^2 + |S| + 2 \leq 36n^2 + n + 2 \leq 39n^2$. This proves the base case, when $r_2 = 1$. Suppose for an induction that after joining together $i$ copies of $\mathcal{G}_{n, A_2 \cup...\cup A_k}$ to make a graph $\mathcal{G}$, we get $m \leq 39i^2n^2$, and we then adjoin a single copy of $\mathcal{G}_{n, A_2\cup...\cup A_k}$ to $\mathcal{G}$. Then, by Theorem ~\ref{Lemma2}, words of sufficient length $\ell$ in the new graph where $\ell \pmod n \in A_1$ will have a number of extender sets equal to the number of extender sets for words of such length in $\mathcal{G}$ (bounded above by $39i^2n^2$) plus the number of extender sets for words of such length in $\mathcal{G}_{n, A_2\cup ... \cup A_k}$ (bounded above by $39n^2$) plus twice the product of the number of vertices in $\mathcal{G}$ and $\mathcal{G}_{n, A_2\cup ... \cup A_k}$ (bounded above by $2(6in)(6n) < 2i(39n^2)$). \newline
\indent Thus, for the resulting graph containing $i + 1$ copies of $\mathcal{G}_{n, A_2\cup ... \cup A_k}$, we have:
$$m < 39i^2n^2 + 39n^2 + 2i(39n^2) = 39n^2(i^2 + 1 + 2i) = 39n^2(i+1)^2,$$
giving the result for $\mathcal{G}_2$ when $i = r_2$. \newline
\indent We next consider adding $(r_3-r_2)$ many copies of $\mathcal{G}_{n, A_3\cup ...\cup A_k}$ to $\mathcal{G}_2$ to make $\mathcal{G}_3$. By the same argument as above, the graph consisting of $(r_3-r_2)$ many copies of $\mathcal{G}_{n, A_3\cup ...\cup A_k}$ will have at most $6n(r_3-r_2)$ vertices and $m \leq 39n^2(r_3-r_2)^2$. Again using Theorem ~\ref{Lemma2} to combine the two graphs, the resulting graph $\mathcal{G}_3$ will have at most $6nr_2 + 6n(r_3 - r_2) = 6nr_3$ vertices, and will have
\begin{align*}
m &\leq 39n^2r_2^2 + 39n^2(r_3-r_2)^2 + 2(6nr_2)(6n(r_3-r_2))\\
 &< 39n^2r_2^2 + 39n^2(r_3-r_2)^2 + 39n^2(2(r_2)(r_3-r_2)) \\
 &= 39n^2(r_2^2 + (r_3-r_2)^2 + 2((r_2)(r_3-r_2)) \\
 &= 39n^2(r_2 + (r_3-r_2))^2 \\
 &= 39n^2r_3^2.
\end{align*}
Continuing inductively, we can see that for $\mathcal{G}_k$, we will have $m \leq 39n^2r_k^2$.
\end{proof}

While we may achieve any desired oscillation scheme, we cannot achieve any eventually periodic sequence we like--$m$ must be sufficiently large.

\begin{theorem}\label{mlowerbds}
Let $X$ be an irreducible sofic shift with $\liminf\{|F_X(\ell)|\} = m$ and $ \limsup \{|F_X(\ell)|\} = m +r$ with least eventual period $n$.  Then we have that $m > \log_2(r)$ and $m > \frac{1}{2}\log_2(\log_2(n))$. If $\liminf \{|E_X(\ell)|\} = m'$ and $\limsup \{|E_X(\ell)|\} = m' + r'$ with least eventual period $n'$, then $m' > \sqrt{\log_2(r')}$ and $m' > \sqrt{\frac{1}{2}\log_2(\log_2(n'))}$.
\end{theorem}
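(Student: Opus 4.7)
The plan is to combine Theorem~\ref{eventuallyperiodic} --- which bounds the eventual period of $\{|F_X(\ell)|\}$ by $(N+1)!$, where $N$ is the total number of follower sets of $X$ --- with a bound of the form $N \leq 2^m$. Given such a bound, the trivial inequality $r < N \leq 2^m$ immediately yields $m > \log_2 r$, while $n \leq (N+1)! \leq (2^m+1)!$ combined with a direct estimate (checked by hand for small $m$ and using Stirling's formula for $m \geq 3$) gives $\log_2\log_2 n < 2m$, yielding $m > \tfrac{1}{2}\log_2\log_2 n$. An analogous bound $N' \leq 2^{m'^2}$ produces the two extender set inequalities by the same arithmetic with $m$ replaced by $m'^2$.

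To establish $N \leq 2^m$, I plan to use the Fischer cover $\mathcal{G}^*$ of $X$, i.e.\ the (unique) minimal right-resolving follower-separated presentation of the irreducible sofic shift $X$, with vertex set of size $V^*$. Letting $F_I$ denote the follower set of vertex $I$, every follower set $F_X(w)$ equals $\bigcup_{I \in V_w} F_I$, where $V_w$ is the nonempty set of terminal vertices of paths in $\mathcal{G}^*$ labeled $w$; hence $F_X(w)$ is determined by $V_w \subseteq V(\mathcal{G}^*)$, so $N \leq 2^{V^*}$. The key inequality $V^* \leq m$ is then proved as follows. For each vertex $I$, the Fischer cover possesses a right-synchronizing word $w_I$ ending at $I$; using irreducibility of $\mathcal{G}^*$, one prepends to $w_I$ a legal path of any prescribed (sufficiently large) length to produce a right-synchronizing word of length exactly $\ell$ ending at $I$. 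Prepending preserves right-synchronization because any word with a right-synchronizing suffix is itself right-synchronizing, and the graph period does not obstruct the construction because the starting vertex of the prepended path may be chosen in any cyclic class. The $V^*$ follower sets of such words at length $\ell$ are the distinct $F_I$, so $|F_X(\ell)| \geq V^*$ for every sufficiently large $\ell$, and hence $m = \liminf |F_X(\ell)| \geq V^*$.

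The extender set bounds follow by a parallel strategy, working with a minimal extender-separated presentation $\mathcal{G}^{**}$ with $V^{**}$ vertices: each extender set $E_X(w)$ corresponds to the subset of $V(\mathcal{G}^{**}) \times V(\mathcal{G}^{**})$ consisting of initial-terminal pairs of paths labeled $w$, giving $N' \leq 2^{(V^{**})^2}$. The analog of the key step, $V^{**} \leq m'$, requires constructing at each vertex $I$ a bi-synchronizing word of every sufficiently large length whose paths begin and end at prescribed vertices. The main obstacle I anticipate lies precisely here: unlike right-synchronization, bi-synchronization is not preserved by merely prepending letters, so one must use the primitivity of $\mathcal{G}^{**}$ to pad the word in a manner that preserves both unique initial and terminal vertices simultaneously. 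Once this is arranged, the $V^{**}$ resulting singleton extender sets give $|E_X(\ell)| \geq V^{**}$ for all sufficiently large $\ell$, and the bounds on $r'$ and $n'$ then follow by the arithmetic from the first paragraph.
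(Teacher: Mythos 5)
Your follower-set half is sound and is essentially the paper's own argument: follower sets are determined by nonempty sets of terminal vertices in an irreducible, right-resolving, follower-separated presentation with a right-synchronizing word, the $V^*$ singleton follower sets are all realized at every sufficiently large length (your remark about prepending and cyclic classes correctly handles the graph period here, since one only needs \emph{some} path of each length ending at the initial vertex of a path labeled $w_I$), so $V^* \leq m$ and $N < 2^{V^*} \leq 2^m$, and the two follower-set inequalities follow by the arithmetic you describe. The gap is in the extender-set half, and it sits exactly where you flag it. Your plan requires, for every sufficiently large $\ell$ and every vertex of $\mathcal{G}^{**}$, a word of length $\ell$ whose set of initial--terminal pairs is a prescribed singleton, and you propose to pad such words ``using the primitivity of $\mathcal{G}^{**}$.'' But an irreducible sofic shift need not be mixing, so a minimal presentation need not be primitive: in a graph of period $p>1$ there are no paths of length $\ell$ between a given ordered pair of vertices unless $\ell$ lies in one fixed residue class mod $p$, so the padding step fails and the claim $|E_X(\ell)| \geq V^{**}$ for all large $\ell$ is not established. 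Theorem~\ref{mlowerbds} is stated for arbitrary irreducible sofic shifts, so primitivity cannot be assumed, and since you leave this step as an acknowledged obstacle rather than resolving it, the extender-set inequalities are not proved.

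The step is also unnecessary, which is how the paper avoids it. Since two words with the same extender set have the same follower set, $|E_X(\ell)| \geq |F_X(\ell)|$ for every $\ell$, hence $m' \geq m \geq V$, where $V$ is the vertex count of the same right-resolving presentation you already used for follower sets. In that presentation the extender set of $w$ is determined by the nonempty set of initial--terminal vertex pairs of paths labeled $w$, so the total number of extender sets is less than $2^{V^2} \leq 2^{{m'}^2}$; the inequalities $m' > \sqrt{\log_2(r')}$ and $m' > \sqrt{\tfrac{1}{2}\log_2(\log_2(n'))}$ then follow by your own arithmetic with $m$ replaced by ${m'}^2$. No extender-separated presentation, no bi-synchronizing words, and no primitivity are needed.
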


\begin{proof}
Let $\mathcal{G}$ be an irreducible right-resolving presentation of $X$ which contains a right-synchronizing word. Let $|V(\mathcal{G})|$ be denoted by $V$. Since a follower set of a word $w$ in a sofic shift is determined by the non-empty set of terminal vertices of paths labeled $w$ in a presentation $\mathcal{G}$, $X$ has less than $|\mathcal{P}(V(\mathcal{G}))| = 2^V$ follower sets, and so $m + r < 2^V$. Because $\mathcal{G}$ is irreducible, right-resolving, and contains a right-synchronizing word, for large enough $\ell$, each singleton will correspond to the follower set of some word for any length greater than $\ell$. Because $m$ occurs infinitely often in the follower set sequence, $m$ must be greater than or equal to the number of singletons in $\mathcal{G}$, that is, $m \geq V$. Thus, we have:
$$2^m \geq 2^V > m + r > r$$
$$m > \log_2(r).$$
Moreover, as there are less than $2^V$ follower sets, the least eventual period $n$ of the follower set sequence is less than or equal to $(2^V)!$, as in Theorem ~\ref{eventuallyperiodic}. Thus $(2^V)! \geq n$. So we have:
$$(2^m)^{(2^m)} > (2^m)! \geq (2^V)! \geq n$$
$$(2^m)^{(2^m)} > n $$
$$\log_2((2^m)^{(2^m)}) > \log_2(n)$$
$$2^m\log_2(2^m) > \log_2(n)$$
$$2^m(m) > \log_2(n)$$
$$\log_2(2^m) + \log_2(m) > \log_2(\log_2(n))$$
$$m + \log_2(m) > \log_2(\log_2(n)).$$
Since $m > \log_2(m)$, we have:
$$2m > \log_2(\log_2(n))$$
$$m > \frac{1}{2}\log_2(\log_2(n)).$$
Now, an extender set of a word $w$ in a sofic shift is determined by the non-empty set of pairs of initial and terminal vertices of paths labeled $w$ in $\mathcal{G}$, so $X$ has less than $2^{V^2}$ extender sets, that is, $m' + r' < 2^{V^2}$. Since two words with the same extender set have the same follower set, $m' \geq m \geq V$. Thus we have:
$$2^{{m'}^2} \geq 2^{V^2} > m' + r' > r'$$
$${m'}^2 > \log_2(r')$$
$$m' > \sqrt{\log_2(r')}.$$
Finally, as there are less than $2^{V^2}$ extender sets, by Theorem~\ref{eventuallyperiodic}, $(2^{v^2})! \geq n'$, giving:
$$(2^{{m'}^2})^{(2^{{m'}^2})} > (2^{{m'}^2})! \geq (2^{V^2})! \geq n'$$
$$(2^{{m'}^2})^{(2^{{m'}^2})} > n'$$
$$(2^{{m'}^2})({m'}^2) > \log_2(n')$$
$${m'}^2 + \log_2({m'}^2) > \log_2(\log_2(n')).$$
Since ${m'}^2 > \log_2({m'}^2)$, we have:
$$2{m'}^2 > \log_2(\log_2(n'))$$
$$m' > \sqrt{\frac{1}{2}\log_2(\log_2(n'))}.$$
\end{proof}

\begin{remark}
For the examples discussed in this paper, we may take $r$ or $r' = r_k$.
\end{remark}

\section{The Non-Sofic Case}
\label{XcrossY}

In this section we demonstrate the existence of a non-sofic shift whose follower set sequence and extender set sequence are not monotone increasing. The construction uses the following fact about Sturmian shifts (for a definition, see ~\cite{Fogg}):

\begin{lemma}\label{Sturmian}
If $Y$ is a Sturmian shift, then for any $\ell \in \mathbb{N}$, $|F_Y(\ell)| = |E_Y(\ell)| = \ell + 1$.
\end{lemma}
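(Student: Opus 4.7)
The plan is to reduce the lemma to the single claim that the map $w \mapsto F_Y(w)$ from $L_\ell(Y)$ to $F_Y(\ell)$ is injective. The upper bound $|F_Y(\ell)|, |E_Y(\ell)| \leq \ell + 1$ is immediate from the defining property of a Sturmian shift that $|L_\ell(Y)| = \ell + 1$, since every follower or extender set in $F_Y(\ell)$ or $E_Y(\ell)$ is realized by at least one word of length $\ell$. Next, by the remark following the definition of the extender set, two words with equal extender sets necessarily have equal follower sets, so the partition of $L_\ell(Y)$ into extender-equivalence classes refines the partition into follower-equivalence classes, giving $|E_Y(\ell)| \geq |F_Y(\ell)|$. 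Combining these, both equalities will follow once I show $|F_Y(\ell)| = \ell + 1$, equivalently that distinct words in $L_\ell(Y)$ have distinct follower sets.

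For the injectivity, I would use the standard realization of $Y$ as the symbolic coding of an irrational rotation $R_\alpha \colon x \mapsto x + \alpha \pmod{1}$ on $\mathbb{T}$ with respect to a two-interval partition $\{I_0, I_1\}$. For each $w \in L_\ell(Y)$, the set $J_w = \bigcap_{i=0}^{\ell - 1} R_\alpha^{-i}(I_{w_i})$ is a nonempty open subinterval of $\mathbb{T}$, and these $\ell + 1$ intervals partition $\mathbb{T}$ up to endpoints. Given $w \neq v$ in $L_\ell(Y)$, the intervals $J_w$ and $J_v$ are disjoint, so (since $R_\alpha^\ell$ is a bijection) their images $R_\alpha^\ell(J_w)$ and $R_\alpha^\ell(J_v)$ are disjoint as well. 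I would then pick a point $y \in R_\alpha^\ell(J_w) \setminus \overline{R_\alpha^\ell(J_v)}$ whose forward orbit avoids the partition boundary, which is possible because the set of such exceptional orbits is countable. Writing $y = R_\alpha^\ell(x)$ for some $x \in J_w$, the forward coding $s \in \{0,1\}^{\mathbb{N}}$ of the orbit of $y$ satisfies $ws \in L(Y)$, so $s \in F_Y(w)$.

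The crux is then to show $s \notin F_Y(v)$. If $s \in F_Y(v)$, there would be a point $y' \in R_\alpha^\ell(\overline{J_v})$ whose forward coding also equals $s$; injectivity of the forward-coding map off the countable exceptional set (which follows from density of rotation orbits: two distinct points with identical forward codings would have translation difference avoiding a dense orbit) then forces $y = y'$, contradicting $y \notin \overline{R_\alpha^\ell(J_v)}$. Hence $F_Y(w) \neq F_Y(v)$, completing injectivity and the lemma. The main obstacle is precisely the injectivity of the forward-coding map outside this countable set, which I plan to invoke as a standard fact about irrational rotation codings, referencing \cite{Fogg}; the rest of the argument is a routine bookkeeping of how intervals, rotations, and follower sets interact.
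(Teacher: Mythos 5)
Your proposal is correct in its overall strategy and matches the paper's: both reduce the lemma to showing that the map $w \mapsto F_Y(w)$ is injective on $L_\ell(Y)$ (using $|L_\ell(Y)| = \ell+1$ for the upper bound and the fact that equal extender sets force equal follower sets to handle $|E_Y(\ell)|$), and both then pass to the irrational-rotation model, where distinct words of length $\ell$ correspond to disjoint cylinder intervals. Where you diverge is in the separating step. The paper produces an explicit witness time: normalizing so that $\alpha < \tfrac12$, it uses density of $\{n\alpha\}$ and a short case analysis on the lengths of $I_w$, $I_v$ to find $N$ with $T_\alpha^{-N}[0,\alpha)$ meeting one cylinder but not the other, so that the symbol $1$ can occur $N$ units later after one word but not the other. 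You instead pick a generic point $y \in R_\alpha^\ell(J_w)\setminus \overline{R_\alpha^\ell(J_v)}$ and appeal to injectivity of the forward itinerary map. Both routes work, but yours has one spot that needs more care than you give it: from $s \in F_Y(v)$ you conclude there is $y' \in R_\alpha^\ell(\overline{J_v})$ \emph{whose forward coding equals} $s$. Points of a Sturmian shift are limits of itineraries, and the sequence $vs$ may only be a one-sided limit itinerary at a point on the (countable) orbit of the partition endpoints, in which case no circle point has literal forward coding $s$ and your injectivity statement does not directly apply to the pair $(y,y')$. This is fixable in a line -- e.g., note that $y$ and the witness point both lie in $\overline{J_{s_1\cdots s_k}}$ for every $k$ and that these nested closed arcs have diameters tending to $0$, forcing $y = y' \in \overline{R_\alpha^\ell(J_v)}$, a contradiction -- but as written it is the one non-routine step and it is exactly the issue the paper's explicit-$N$ argument is designed to sidestep. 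The trade-off: your argument is more conceptual and generalizes to other almost one-to-one codings, while the paper's is more elementary and self-contained, needing only density of the orbit and the normalization $\alpha < \tfrac12$.
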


\begin{proof}
From ~\cite{Fogg}, it is known that for a fixed length $\ell$, Sturmian shifts have exactly $\ell + 1$ words in $L_\ell(Y)$, so it is sufficient to show that any two words of length $\ell$ in $Y$ have distinct follower and extender sets. We also know from ~\cite{Fogg} that Sturmian shifts are symbolic codings of irrational circle rotations, say by $\alpha \notin \mathbb{Q}$, where the circle is expressed by the interval $[0,1]$ with the points $0$ and $1$ identified, and the interval coded with $1$ has length $\alpha$. We may take $\alpha < \frac{1}{2}$ by simply switching the labels $0$ and $1$ whenever $\alpha > \frac{1}{2}$. Furthermore, the cylinder sets of words of length $\ell$ correspond to a partition of the circle into $\ell + 1$ subintervals, so for two words of length $\ell$ in $Y$, each corresponds to a subinterval of the circle, and the two subintervals are disjoint. Let $w$ and $v$ be two distinct words in $L_\ell (Y)$ corresponding to disjoint intervals $I_w$ and $I_v$, $[0, \alpha)$ be the interval coded with $1$, and $T_\alpha$ be the rotation by $\alpha$. We claim that there exists an $N \in \mathbb{N}$ such that $T_\alpha^{-N}[0,\alpha)$ intersects one of $I_w$ and $I_v$ but not the other: Since $\alpha < \frac{1}{2}$, and since $\{n\alpha \ | \ n \in \mathbb{N} \}$ is dense in the circle, if one of $I_w$ and $I_v$ has length at least $\frac{1}{2}$, there exists $N \in \mathbb{N}$ such that $T_\alpha^{-N}[0, \alpha)$ is contained entirely inside that large interval, and thus completely disjoint from the other. Otherwise, take $I_w^c$, which clearly has length at least $\frac{1}{2}$, and find an $N \in \mathbb{N}$ such that $T_\alpha^{-N}[0,\alpha)$ is contained inside $I_w^c$ and intersects $I_v \subseteq I_w^c$, again possible due to denseness of $\{n\alpha \ | \ n \in \mathbb{N} \}$. Hence we have proved our claim, that $\exists \> N \in \mathbb{N}$ such that $T_\alpha^{-N}[0,\alpha)$ intersects one of $I_w$ and $I_v$ but not the other, and therefore, that the symbol $1$ may follow one of the words $w$ and $v$ exactly $N$ units later, but not the other. Therefore $w$ and $v$ have distinct follower sets, and thus, distinct extender sets, completing the proof.
\end{proof}

We use Lemma ~\ref{Sturmian} and the sofic shifts constructed in Section ~\ref{followers} to build a non-sofic shift with non-monotonically increasing follower and extender set sequences:

\begin{proof}[Proof of Theorem ~\ref{Non-sofic}]
Take a sofic shift $X = X_{\mathcal{G}_{n, S}}$ for any $n, S$ as defined in Section ~\ref{followers}. Take the direct product of $X$ and a Sturmian shift $Y$. Two words in $X \times Y$ have the same extender set if and only if the projection of those words to both their first and second coordinates have the same extender set in $X$ and $Y$, respectively. That is, if two words $w$ and $v$ have different extender sets in $X$, then any two words whose projections to their first coordinate are $w$ and $v$ will have different extender sets in $X \times Y$, and similarly for words $w'$ and $v'$ with different extender sets in $Y$. Therefore $|E_{X\times Y}(\ell)| = |E_X(\ell)|\cdot |E_Y(\ell)|$. By similar logic, $|F_{X\times Y}(\ell)| = |F_X(\ell)|\cdot |F_Y(\ell)|$. \newline
\indent Thus, if we let $m = \liminf \{|E_X(\ell)|\}_{\ell \in \mathbb{N}}$, then for any $\ell \geq 3n+3$ with $\ell \pmod n \notin S$, we have $|E_{X\times Y}(\ell)| = m\cdot (\ell + 1)$, and if $\ell \pmod n \in S$, then $|E_{X\times Y}(\ell)| = (m + 1)(\ell + 1)$. As $m$ is fixed and $\ell$ approaches infinity, it is clear that $\{|E_{X\times Y}(\ell)|\}$ is unbounded, and thus the shift $X \times Y$ is nonsofic. Furthermore, as the direct product of a mixing shift ($X$ is primitive by Lemma~\ref{primitivity}, and therefore mixing) with an irreducible shift, $X \times Y$ is irreducible. \newline
\indent Choose $\ell$ large enough that $\ell > m -1$, and such that $\ell \pmod n \in S$ and $\ell + 1 \pmod n \notin S$. Then 
\begin{align*}
|E_{X\times Y}(\ell)| &= (m + 1)(\ell + 1) \\
&= m\ell + m + \ell + 1 \\
&> m\ell + m + (m -1) +1 \\
&= m\ell +2m \\
&= m(\ell + 2) \\
&= |E_{X\times Y}(\ell +1)|.
\end{align*}
Therefore the extender set sequence of $X\times Y$ is not monotone increasing. A similar argument shows that the follower set sequence of $X\times Y$ is not monotone increasing as well.
\end{proof}

\begin{example}
Let $X= X_{\mathcal{G}_{2, \{0\}}}$. Then $m = \liminf \{|E_X(\ell)|\}_{\ell \in \mathbb{N}} = (3n + 2|S| + 1)^2 + |S| + 2 = 84$, so for $\ell = 84$ (since $84 > 3n + 3, 84 > m-1, 84 \pmod 2 \in \{0\},$ and $85 \pmod 2 \notin \{0\}$), $|E_{X\times Y}(\ell)| > |E_{X\times Y}(\ell + 1)|$. In particular, $|E_{X\times Y}(84)| = (85)(85) = 7225$ while $|E_{X\times Y}(85)| = (84)(86) = 7224$.
\end{example}

\begin{remark}
The reader may observe that once $\ell$ is sufficiently large for the follower or extender set sequence of $X \times Y$ to decrease, these decreases will happen for exactly the same lengths $\ell$ as the decreases in the follower or extender set sequence of $X = X_{\mathcal{G}_{n,S}}$. Thus there are infinitely many lengths for which the follower or extender set sequence of $X \times Y$ decreases.
\end{remark}

\section*{acknowledgements} 

The author thanks his advisor, Dr. Ronnie Pavlov, for many helpful conversations.

\bibliography{biblio}{}
\bibliographystyle{plain}

\end{document}